\newtheorem{theorem}{Theorem}[section]
\newtheorem{proposition}[theorem]{Proposition}
\newtheorem{lemma}[theorem]{Lemma}
\newtheorem{corollary}[theorem]{Corollary}
\newtheorem{question}[theorem]{Question}
\newtheorem{definition}[theorem]{Definition}
\theoremstyle{plain}
\theoremstyle{remark}
\newtheorem{remark}[theorem]{Remark}
\newcommand{\C}{{\mathbb C}}
\newcommand{\Q}{{\mathbb Q}}
\newcommand{\Z}{{\mathbb Z}}
\newcommand{\N}{{\mathbb N}}
\newcommand{\Qbar}{\bar{\Q}}
\DeclareMathOperator{\den}{den}
\DeclareMathOperator{\lcm}{lcm}
\DeclareMathOperator{\Tr}{Tr}
\DeclareMathOperator{\Norm}{N}
\DeclareMathOperator{\ord}{ord}
\newcommand{\bP}{{\mathbb P}}
\newcommand{\bfu}{{\mathbf u}}
\newcommand{\cO}{\mathcal{O}}
\newcommand{\scrL}{\mathscr{L}}
\DeclareMathOperator{\characteristic}{char}
\DeclareMathOperator{\Trace}{Trace}
\author{Jason P.~Bell}
\address{
Jason P.~Bell\\
University of Waterloo\\
Department of Pure Mathematics\\
Waterloo, Ontario, Canada N2L 3G1}
\email{jpbell@uwaterloo.ca}
\thanks{Jason Bell was supported by NSERC grant RGPIN-2016-03632; Khoa Nguyen was supported by NSERC grant  RGPIN-2018-03770 and CRC tier-2 research stipend 950-231716. We are grateful to Andrew Granville and the anonymous referee for helpful comments that improve the paper.}
\author{Khoa D.~Nguyen}
\address{
Khoa D.~Nguyen \\
Department of Mathematics and Statistics\\
University of Calgary\\
AB T2N 1N4, Canada
}
\email{dangkhoa.nguyen@ucalgary.ca}
\author{Umberto Zannier}
\address{
Umberto Zannier\\
Scuola Normale Superiore, Classe di Scienze Matematiche e Naturali, Pisa, Italy
}
\email{umberto.zannier@sns.it}
\keywords{D-finite power series, heights, rational functions, }
\subjclass[2010]{Primary: 13F25, 12H05. Secondary: 11G50}
\begin{document}
	\title[D-finiteness, rationality, and height]{D-finiteness, rationality, and height II: lower bounds over a set of positive density}
	\date{November 2022}
	\begin{abstract}		 We consider D-finite power series $\displaystyle f(z)=\sum_{n\geq 0} a_n z^n$ with coefficients in 
	a number field $K$.  We show that there is a dichotomy governing the behaviour of $h(a_n)$ as a function of $n$, where $h$ is the absolute logarithmic Weil height. As an immediate consequence of our results, we 
have that	
	either $f(z)$ is rational or $h(a_n)>[K:\mathbb{Q}]^{-1}\cdot \log(n)+O(1)$ for $n$ in a set of positive upper density and this is best possible when $K=\mathbb{Q}$.  
			\end{abstract}
	
	\maketitle
	
	\section{Introduction}\label{sec:intro}
	Within number theory, combinatorics, and other disciplines, it is often of use to understand the asymptotics of a sequence of complex numbers; that is, to describe how the sequence is growing.  This approach is especially fruitful when trying to gain insight into the behaviour of integer-valued sequences satisfying recurrences.  When considering non-integer-valued sequences, however, knowing the actual size (or, more precisely, modulus) is typically of less importance and one often uses other measures that serve as proxies for how the values of a sequence $(a_n)$ increase in complexity with $n$.  For number fields, one generally uses the notion of \emph{heights} (see \S\ref{sec:height} for relevant definitions) as a measure of the complexity of elements of the field, and interprets numbers with smaller height as being simpler, or less complex, than numbers with larger height.  
	
	In the paper \cite{BNZ20_DF}, improving results of 
	van der Poorten-Shparlinski~\cite{vdPS96_OL} and Bell-Chen~\cite{BC17_PS}, we considered exactly these sorts of questions, looking at the growth of heights of coefficients of multivariate D-finite power series with coefficients in a number field $K$.  These are power series $F(z_1,\ldots ,z_d)\in K[[z_1,\ldots ,z_d]]$ in variables $z_1,\ldots ,z_d$ that satisfy non-trivial homogeneous linear differential equations with polynomial coefficients with respect to each differential operator $\partial/\partial z_i$.  In this case we found that if the heights of the coefficients of $F$ do not grow too quickly then $F$ is necessarily a rational function of a special form.  In simpler terms, if the complexity of the coefficients of $F$ grows sufficiently slowly then $F$ is well behaved.  More precisely, we showed that if 
	$$f(z_1,\ldots ,z_d) = \sum a_{n_1,\ldots ,n_d} z_1^{n_1}\cdots z_d^{n_d}\in K[[z_1,\ldots,z_n]]$$ and the logarithmic Weil height of $a_{n_1,\ldots ,n_d}$ is $o(\log\Vert{\bf n}\Vert)$, with $\Vert{\bf n}\Vert:=n_1+\cdots +n_d$, as $\Vert{\bf n}\Vert\to\infty$ then $f(z_1,\ldots ,z_d)$ is the power series expansion of a rational function and the coefficients $a_{n_1,\ldots,n_d}$ belong to a finite set.   
This result suggests, then, that there is a type of ``height boundary'' for coefficients of multivariate D-finite series, given by the function $\log\,||{\bf n}||$, where if the coefficients have heights that are small compared to this boundary function then the power series is rational 
and the coefficients satisfy $h(a_{n_1,\ldots,n_d})=O(1)$.

	This paper provides a more refined result toward a complete classification of the height growth of
	the coefficients of a D-finite univariate power series in $\Qbar[z]$. 
	These are exactly power series solutions  of linear homogeneous differential equations with coefficients in $\Qbar(z)$ and play an important role in many areas of mathematics. While the coefficients of the power series of a rational function in $\Qbar(z)$ satisfy a linear recurrence relation with constant coefficients and are rather well understood, the coefficients of a D-finite series 
satisfy a linear recurrence relation with polynomial coefficients and remain quite mysterious (see the questions in Section~\ref{sec:conclude}). 
	Although the authors obtained most of the results in this paper in the fall of 2019 shortly after \cite{BNZ20_DF}, its release has been delayed due to various reasons. In the meantime,  \cite{BNZ20_DF} has various natural motivations toward certain results in number theory and dynamics. In arithmetic dynamics, \cite{BNZ20_DF} motivates a height gap conjecture in work of Bell, Hu, Ghioca, and Satriano  \cite{BHS20_HG,BGS21_DU} concerning the quantity
	$\limsup_{n\to\infty} h(f(\Phi^n(x)))/\log n$,
	where $\Phi:\ X\dashrightarrow X$ and $f:\ X\dashrightarrow\bP^1$ are rational maps
	on quasi-projective varieties defined over $\Qbar$ and $x\in X(\Qbar)$ is such that
	the forward orbit of $x$ under $\Phi$ is well defined and avoids the indeterminacy locus of $f$. In the theory of Mahler functions, Adamczewski, Bell, and Smertnig \cite{ABS20_AH} provide a complete classification of the possible height growth of the coefficients of a Mahler function. The paper \cite{BNZ20_DF} together with earlier work of Perelli and Zannier \cite{PZ84_OR,Zan96_OP} motivate \cite{BN21_AA} in which an analogue of a conjecture of Ruzsa for polynomials over finite fields is established. Finally \cite{BNZ20_DF,BN21_AA} and the discussion related to results in this paper 
in part give motivation to Dimitrov for his spectacular solution \cite{Dim19_AP} of the Schinzel-Zassenhauss conjecture from the 1960s.

	For an algebraic number $\alpha$, we define its denominator $\den(\alpha)$ to be the smallest positive integer $d$ such that $d\alpha$ is an algebraic integer. We recall that a subset $S$ of $\N$ is said to have positive (upper) density if 
	$$\limsup_{n\to\infty}\frac{\vert S\cap[1,n]\vert}{n}>0;$$
	otherwise $S$ is said to have zero density. Finally, we use Vinogradov notation $\gg$ and $\ll$ for sequences of non-negative real numbers, where $a_n\gg b_n$ for $n$ in a set $T$ means that there is a positive constant $C$ such that $Ca_n\ge   b_n $ for $n\in T$, with $\ll$ defined analogously.   
	Our main result gives a coarse classification of the growth of $h(a_n)$, where $$f(z)=\displaystyle\sum_{n=0}^{\infty}a_nz^n$$ is a D-finite power series with algebraic coefficients, $n$ belongs to a set of positive density, and where $h:K \to [0,\infty)$ is the absolute logarithmic Weil height.  
	
	\begin{theorem}\label{thm:main thm}
	Let $K$ be a number field and let $f(z)=\sum_n  a_nz^n\in K[[z]]$ be a D-finite power series. Let $r$ be the radius of convergence of $f$. Then, we have:
	\begin{itemize}
		\item [(a)] If $r\in\{0,\infty\}$ and $f$ is not a polynomial then $h(a_n)=O(n\log n)$ as $n$ tends to infinity and $h(a_n)\gg n\log n$ on a set of positive upper density.
		\item [(b)] If $r\notin\{0,\infty\}$ then at least one of the following holds:
		\begin{itemize}		
		\item [(i)] $h(a_n)\gg n$ on a set of positive upper density;
		
		\item [(ii)] $\den(a_n)\gg n$, and hence
					$\displaystyle h(a_n)>\displaystyle\frac{1}{[K:\Q]}\log n+O(1)$, 
					on a set of positive upper density;
					
		\item [(iii)] $f(z)$ is a rational function whose denominator divides $(1-z^M)^N$ for some nonnegative integers $M,N$ (i.e. all of the finite poles of $f$ are roots of unity).
		\end{itemize}	
	\end{itemize}
	\end{theorem}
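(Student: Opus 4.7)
My plan is to split by the radius of convergence $r$, as the statement does. The common starting point is the D-finite recurrence
\[
\sum_{i=0}^{d} P_i(n)\, a_{n+i}=0, \qquad P_i\in K[x],\ P_d\not\equiv 0,
\]
which holds for all $n$ outside a finite set. Solving for $a_{n+d}$ gives $a_{n+d}=\sum_{i<d}(P_i(n)/P_d(n))\,a_{n+i}$, and iterating this expression is the source of both the archimedean and non-archimedean bounds below.

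For part (a), the upper bound $h(a_n)=O(n\log n)$ is immediate: each step of the recurrence loses at most $O(\log n)$ in logarithmic Weil height (from the rational factors $P_i(n)/P_d(n)$), and telescoping across $n$ steps gives the bound. For the matching lower bound when $r\in\{0,\infty\}$ and $f$ is non-polynomial, I would invoke the Birkhoff--Trjitzinsky asymptotic theory of P-recursive sequences. When $r=0$, at the distinguished archimedean place $v$ one obtains $|a_n|_v\sim c\,(n!)^{s}\,n^{\theta}\,\rho^{n}$ on a finite union of arithmetic progressions (hence a set of positive density), with a positive rational $s$ extractable from the Newton polygon of the recurrence; the factor $(n!)^{s}$ alone forces $h(a_n)\gg n\log n$ there. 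When $r=\infty$ and $f$ is non-polynomial, the same theory gives $|a_n|_v\sim c\,(n!)^{-s}\,\rho^{n}$ at some archimedean $v$, and the product formula then forces the missing $n\log n$ in logarithm to reappear elsewhere, either at another archimedean place or via $\den(a_n)$; either way $h(a_n)\gg n\log n$ on a positive-density set.

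For part (b), the subcase $r<1$ follows from upgrading $\limsup |a_n|_v^{1/n}=1/r>1$ (at the archimedean place realising the radius) to a genuine positive-density lower bound via the same P-recursive asymptotics, yielding (i). The heart of the proof is the subcase $r\ge 1$, which I attack contrapositively: suppose all three of (i), (ii), (iii) fail. Then on a density-one set we have simultaneously $h(a_n)=o(n)$ and $\den(a_n)=o(n)$, while $f$ is not a rational function with cyclotomic denominator. Because denominators accumulate multiplicatively through the factors $P_d(n)^{-1}$ in the iterated recurrence, the sublinear bound $\den(a_n)=o(n)$ on a density-one set should in fact force $\den(a_n)$ to be bounded, so that the $a_n$ lie in a fixed $S^{-1}\mathcal{O}_K$. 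This places us in position to invoke the rationality criterion of \cite{BNZ20_DF}: a D-finite power series with bounded denominators, finite positive radius of convergence, and sufficiently slow archimedean height growth must be rational. Partial-fraction decomposition $a_n=\sum_j c_j\, R_j(n)\,\beta_j^n$ then finishes the argument: any pole $\beta_j^{-1}$ with $|\beta_j|>1$ violates the failure of (i); any pole on the unit circle that is not a root of unity has Galois conjugates contributing multiplicatively to $\den(a_n)$ and violates the failure of (ii); hence all poles must be roots of unity, contradicting the failure of (iii).

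The chief obstacle is the rigidity step in the $r\ge 1$ subcase: promoting the sublinear denominator growth on a density-one set to true boundedness, and then transferring to the rationality theorem of \cite{BNZ20_DF}. The positive-density (as opposed to $\limsup$) conclusions throughout similarly rest on the fine asymptotic regularity of P-recursive sequences, which must be applied carefully to exclude sparse exceptional sets on which the dominant asymptotics could fail.
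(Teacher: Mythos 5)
Your proposal takes a genuinely different route from the paper, but it has a critical gap in the central $r\geq 1$ subcase of part (b), and the tools you invoke do not cover the distance you need them to.

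The paper's argument for $r\notin\{0,\infty\}$ does not proceed by bounding $\den(a_n)$ itself, nor by Birkhoff--Trjitzinsky asymptotics. The key objects are the Hankel determinants $\Delta_m(g)$ of auxiliary series $g(z)=\sum P(n)a_n z^n$, where $P$ is an integer-valued polynomial constructed via Siegel's lemma to vanish on a prescribed zero-density exceptional set $S$. P\'olya's inequality bounds $|\Delta_m(g)|$ from above; the fact that $\lcm\{d_i:i\le m,\,i\notin S\}^{m+1}\Delta_m(g)$ is a rational integer bounds it from below; and the comparison forces either exponential growth of the truncated lcm (giving (ii) through Lemma~\ref{lem:kappa from beta}) or that all $\Delta_m(g)$ vanish in a long range, which via Kronecker and Lemmas~\ref{lem:matrix B} and \ref{lem:f=P/Q} produces a rational approximation so good that $f$ itself must satisfy $\scrL(P/Q)=0$ and hence be rational. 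Fabry's gap theorem handles $r\ne 1$, and the number-field case reduces to $\Q$ by applying the argument to the traces $\Tr(\gamma_i f)$.

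The gap in your version is the step ``the sublinear bound $\den(a_n)=o(n)$ on a density-one set should in fact force $\den(a_n)$ to be bounded.'' This is not a consequence of the iterated recurrence: the factors $P_d(n)^{-1}$ introduce new primes linearly in $n$, and the series $\log(1+z)$ already has $\den(a_n)=n$ unbounded while remaining D-finite with $r=1$; nothing in the recurrence mechanism prevents denominators growing like, say, the largest prime factor of $n$, which is $o(n)$ on a density-one set yet unbounded. The paper avoids exactly this issue by never trying to bound individual denominators and instead controlling the \emph{least common multiple} $\lcm\{d_i:i\le n,\,i\notin S\}$, which captures accumulated arithmetic complexity even when each $d_n$ is small. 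Compounding this, the rationality criterion of \cite{BNZ20_DF} you want to invoke requires $h(a_n)=o(\log n)$, which is far stronger than the $h(a_n)=o(n)$ you have after negating~(i); your hypotheses do not meet its threshold, even granting bounded denominators. A further unaddressed point is the reduction from a general number field $K$ to $\Q$: the Hankel-determinant integrality argument requires rational coefficients, and the paper's passage through the trace forms $\Tr(\gamma_i f)$ has no analogue in your sketch.

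For part (a) and the $r<1$ subcase, your Birkhoff--Trjitzinsky appeal is in the right spirit (the paper too exploits that the Gevrey order produces an $(n!)^s$ factor), but what you would actually need to justify the ``positive density'' conclusion rigorously is something like Fabry's gap theorem, which the paper uses via Lemma~\ref{lem:fabry} to rule out the dominant asymptotics holding only on a sparse set; asserting regularity along arithmetic progressions from Birkhoff--Trjitzinsky alone does not exclude degenerate cancellation for the specific solution at hand.
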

	
	\begin{remark}
	In fact, we prove a more precise version of Theorem~\ref{thm:main thm} (see Theorem~\ref{thm:slightly stronger} and Lemma~\ref{lem:kappa from beta}). When (iii) holds, we have $a_n=P(n)$ for all large $n$ in an appropriate arithmetic progression where $P$ is a polynomial. In this case, we still have $h(a_n)>\log n+O(1)$ on a set of positive density unless $P$ is constant, in which case the sequence $(a_n)_{n\geq 0}$ is eventually periodic.
	\end{remark}
	
	Theorem~\ref{thm:main thm} immediately yields the following result. 
	\begin{corollary}\label{cor:cor of main thm}
	Let $K$ and $f$ be as in Theorem~\ref{thm:main thm}. Suppose that $f$ is not a rational function, then $$\displaystyle h(a_n)> \frac{1}{[K:\Q]}\log n+O(1)$$ on a set of positive upper density.
	\end{corollary}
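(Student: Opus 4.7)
The plan is to derive the corollary directly from Theorem~\ref{thm:main thm} by a short case analysis on the radius of convergence $r$ of $f$. Since $f$ is, by hypothesis, not a rational function, in particular $f$ is not a polynomial, and clause (iii) of part (b) is automatically excluded. With these two observations, the dichotomy in the theorem reduces to three concrete scenarios, each of which yields a lower bound that is at least as strong as $\tfrac{1}{[K:\Q]}\log n + O(1)$ on a set of positive upper density.

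First I would handle the boundary case $r\in\{0,\infty\}$. Here part (a) of the theorem applies and gives $h(a_n)\gg n\log n$ on a set of positive upper density. Since $n\log n$ certainly dominates $\tfrac{1}{[K:\Q]}\log n$ as $n\to\infty$, the desired inequality holds on the very same set. Next I would turn to the case $r\notin\{0,\infty\}$, where part (b) leaves only (i) or (ii) available. If (i) holds, then $h(a_n)\gg n$ on a positive density set, and again this dominates $\tfrac{1}{[K:\Q]}\log n$. If (ii) holds, then $h(a_n) > \tfrac{1}{[K:\Q]}\log n + O(1)$ on a set of positive upper density by the explicit inequality recorded in (ii), which is precisely the conclusion sought.

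There is no real obstacle here: the content of the corollary is packaged inside Theorem~\ref{thm:main thm}, and the proof amounts to noting that rationality of $f$ is the only escape from the claimed lower bound. The one point worth verifying carefully is that the positive-density set produced in each sub-case is the same set on which the final inequality is stated; this is automatic because the set is inherited unchanged from the theorem in each branch. In particular, no height computation, no use of the differential equation satisfied by $f$, and no appeal to the denominator estimate beyond what is already contained in (ii) is required.
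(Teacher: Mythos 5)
Your proof is correct and matches the paper's approach: the paper simply asserts that the corollary "immediately" follows from Theorem~\ref{thm:main thm}, and your case analysis (ruling out (iii) via non-rationality, noting (a) and (b)(i) give stronger-than-needed lower bounds, and reading off (b)(ii)) is exactly the verification that is left implicit there. The only cosmetic point is that in the branches where $h(a_n)\gg n\log n$ or $h(a_n)\gg n$, you may need to discard finitely many small $n$ to absorb the additive $O(1)$ constant, but this does not affect positive upper density.
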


	\begin{remark}
	It follows from \cite{BNZ20_DF} that $\displaystyle\limsup h(a_n)/\log n>0$ or in other words there exist $C>0$ and an infinite $S\subseteq\N$ depending on $f$ such that $h(a_n)>C\log n$ for $n\in S$. Corollary~\ref{cor:cor of main thm} improves this in two aspects: one may require that $S$ has positive density and take the uniform constant $C=1/[K:\Q]$ up to the additional $O(1)$ term. When $K=\Q$, we have the lower bound $h(a_n)> \log n+O(1)$ on a set of positive density.  This is best possible thanks to examples such as
	$$\log(1+z^m)=z^m-\frac{z^{2m}}{2}+\frac{z^{3m}}{3}-\cdots,$$
	which show both that we cannot improve the constant and that the density of the set $S$ can be arbitrarily close to zero.	
	For a general number field $K$, it is plausible that the lower bound $\log n+O(1)$ remains valid (i.e., the factor $1/[K:\Q]$ in the statement of Corollary~\ref{cor:cor of main thm}~(ii) can be improved to $1$) but our method has not been able to yield this.  We leave this as a question for future work (see Question \ref{Q1}).  
	\end{remark}

	The outline of this paper is as follows. In \S\ref{prelim}, we give an overview of Weil heights and 
	the technical ingredients in the proof of Theorem~\ref{thm:main thm}. 
	This includes the use of Hankel determinants, P\'olya's inequality, and diophantine approximation results when approximating a D-finite series by a rational function. In \S\ref{sec:proof}, we first prove Theorem~\ref{thm:slightly stronger} which is a slightly more precise variant of Theorem~\ref{thm:main thm} then obtain the proof of the latter. Besides the technical results introduced in \S\ref{prelim}, our method involves the construction of an auxiliary polynomial $P(z)$ and work with the power series
	$\sum P(n)a_nz^n$ instead of the original series
	$ f(z)=\sum a_nz^n$. Right after the completion of this paper, a variant of
	our method has been extended to obtain a general criterion for the P\'olya-Carlson dichotomy and its application to the Artin-Mazur zeta function associated to endomorphisms on positive characteristic tori \cite{BGNS22_AG}. 
	In \S\ref{sec:conclude} we provide some comments to results in the paper. This includes a demonstration that one cannot, in a certain sense, strengthen Theorem~\ref{thm:main thm} in the style of Theorem~\ref{thm:slightly stronger} and several open problems for future work. These problems include a complete classification of the possible growth of $h(a_n)$ for a D-finite series $\sum a_nz^n\in\Qbar[[z]]$ as well as a similar long standing open problem in the theory of Siegel E-functions.

	\section{Preliminary results}\label{prelim}
	\subsection{Heights}\label{sec:height}
	We give a quick overview of heights along with the results we will need. For a number field $K$,
	we let $M_K^\infty$ denote the set of archimedean places (equivalence classes of archimedean absolute values) of $K$ and we let 
	$M_K^0$ denote the set of finite places.  
  	We write $M_K=M_K^\infty\cup M_K^0$.
	 For every place
  	$w\in M_K$, let $K_w$ denote the completion of 
  	$K$ with respect to $w$ and we let
  	$d(w)$ denote the quantity $[K_w:\Q_v]$ where $v$ is the restriction of
  	$w$ to $\Q$. 
	
	We now follow the treatment given in \cite[Chapter~1]{BG06_HI}:
  	for every $w\in M_K$ with restriction $v$ on $\Q$, we may always take $\vert\cdot \vert_v$ to be either the ordinary Euclidean absolute value or the $p$-adic absolute value for some prime $p$.
  	We can then normalize $\vert \cdot\vert_w$ by defining
  	$$\vert x\vert_w = \vert \Norm_{K_w/\Q_v}(x) \vert_v^{1/[K:\Q]}.$$

  	Let $m\in\N$. Given a vector $\bfu=(u_0,\ldots,u_m)\in K^{m+1}\setminus\{\mathbf 0\}$ and $w\in M_K$,
  	we can now define $$\vert \bfu\vert_w:=\displaystyle\max_{0\leq i\leq m} \vert u_i\vert_w.$$
  	In particular, we can now define heights for points in projective varieties.  
	For $P \in \bP^m(\Qbar)$, let $K$ be a number
  	field such that $P$ has a representative 
  	$\bfu\in K^{m+1}\setminus\{\mathbf 0\}$
  	and define:
  	$$H(P)=\prod_{w\in M_K} \vert \bfu\vert_w.$$
  	Define $h(P)=\log (H(P))$. Finally, for $\alpha\in \Qbar$, 
  	we write $H(\alpha)=H([\alpha:1])$
  	and $h(\alpha)=\log(H(\alpha))$. 
	
	Then the height function defined has the following properties (see \cite[Part~B]{HS00_DG} or \cite[Chapters~1--2]{BG06_HI} for proofs of these facts):
	  	\begin{enumerate}
  	\item [(i)] For $a\in \Qbar^*$ and $m\in\Z$, $h(a^m)=\vert m\vert h(a)$.
  	\item [(ii)] For $r\in \N$ and $a_1,\ldots,a_r\in\Qbar$, $h(a_1+\ldots+a_r)\leq h(a_1)+\ldots+h(a_r)+\log r$.
  	\item [(iii)] For every $a,b\in\Qbar$, $h(ab)\leq h(a)+h(b)+\log 2$. Hence if $b\neq 0$ then $h(ab)\geq h(a)-h(b)-\log 2$.
  	\end{enumerate}

	\subsection{Hankel determinants}\label{subsec:Hankel} 
	Throughout this subsection, let $k$ be a field, let $(a_n)_{n\in\N_0}$ be a $k$-valued sequence, and let $$f(z)=\displaystyle\sum_{n=0}^\infty a_nz^n\in k[[z]]$$ be the associated formal power series.  We assume tacitly that $f$ is nonzero.
We let $\ord$ denote the order function on $k[[z]]$: for $g\in k[[z]]$, $\ord(g)$ is the largest integer $m$ such that $z^m\mid g$ (with the convention that $\ord(0)=\infty$).  We use the notation $g=O(z^m)$ to mean that  $\ord g\ge m$.

For integers $\ell,m\ge 0$ we define the Hankel matrix  $H_{\ell,m}(f)$  and Hankel determinant $\Delta_{\ell,m}(f)$ by:
\begin{equation*}
H_{\ell,m}(f)=\begin{pmatrix} a_\ell & a_{\ell+1} & \ldots & a_{\ell+m} \\
 a_{\ell+1} & a_{\ell+2} &\ldots  & a_{\ell+m+1}  \\
 \ldots  \\
 a_{\ell+m} & a_{\ell+m+1} &\ldots  & a_{\ell+2m} \end{pmatrix},\qquad \Delta_{\ell,m}(f):=\det H_{\ell,m}(f).
\end{equation*}
 
Hankel determinants are of importance due to their connection with rationality of power series.
We begin with a lemma, which illustrates this connection in terms of approximation by rational power series.

\begin{lemma} \label{lem:H1} 
Let $\ell$ and $m$ be nonnegative integers. Then $\Delta_{\ell,m}(f)=0$ if and only if there exist polynomials $P(z),Q(z)\in k[z]$  with
$\deg P\le \ell+m-1$, $Q(z)\neq 0$, $\deg Q\le m$ and $P(z)-Q(z)f(z)=O(z^{\ell+2m+1})$.
\end{lemma}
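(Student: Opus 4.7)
The plan is to recast both directions of the equivalence as a single linear algebra statement: the vanishing of $\Delta_{\ell,m}(f)$ is equivalent to the existence of a nontrivial kernel vector for $H_{\ell,m}(f)$, and such a kernel vector, after reversing its coordinates, is exactly the coefficient vector of a polynomial $Q$ killing the ``middle band'' of coefficients of $Qf$.

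First I would unpack the $O$-condition. Writing $Q(z) = \sum_{j=0}^m q_j z^j$, the coefficient of $z^n$ in $Q(z)f(z)$ is $\sum_{j=0}^m q_j a_{n-j}$ (for $n \geq m$). Since $\deg P \leq \ell + m - 1$, the requirement $P(z) - Q(z)f(z) = O(z^{\ell + 2m+1})$ forces, for each $n \in \{\ell+m, \ell+m+1, \ldots, \ell+2m\}$, the identity
\[
\sum_{j=0}^m q_j\, a_{n-j} = 0.
\]
Setting $i = n - \ell - m$ and $r_k = q_{m-k}$, this becomes $\sum_{k=0}^m r_k\, a_{\ell + i + k} = 0$ for $i = 0, 1, \ldots, m$, i.e.\ $H_{\ell,m}(f)\,(r_0, \ldots, r_m)^T = 0$. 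Conversely, since $P$ is otherwise unconstrained below degree $\ell+m$, once such a $Q$ is found we can simply define $P(z)$ to be the truncation of $Q(z)f(z)$ modulo $z^{\ell+m}$, which has degree at most $\ell + m - 1$ and makes $P - Qf$ vanish to order $\ell + 2m + 1$ by construction.

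For the ($\Leftarrow$) direction, I would just read off: if $P,Q$ as described exist then the displayed equations hold with $Q \neq 0$, so $H_{\ell,m}(f)$ has a nonzero kernel vector and hence $\Delta_{\ell,m}(f) = 0$. For the ($\Rightarrow$) direction, if $\Delta_{\ell,m}(f) = 0$ then $H_{\ell,m}(f)$ has a nonzero kernel vector $(r_0, \ldots, r_m)$; set $q_j = r_{m-j}$ and $Q(z) = \sum q_j z^j$ (which is nonzero and of degree at most $m$), define $P$ as the aforementioned truncation, and verify the three conditions.

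There is no real obstacle here; the statement is essentially a bookkeeping translation between ``a linear relation among the columns of $H_{\ell,m}(f)$'' and ``a polynomial $Q$ of degree $\leq m$ such that $Qf$ has a block of $m+1$ consecutive zero coefficients starting at degree $\ell+m$.'' The only subtlety worth flagging is the coordinate reversal $q_j \leftrightarrow r_{m-j}$, which arises because multiplication by $Q$ produces convolutions $\sum_j q_j a_{n-j}$ while the Hankel matrix reads indices in the order $a_{\ell + i + k}$; getting this reversal right is the one spot where a careless reader could misalign the bookkeeping.
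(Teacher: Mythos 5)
Your proof is correct and follows essentially the same route as the paper: both translate vanishing of $\Delta_{\ell,m}(f)$ into a nontrivial kernel vector of $H_{\ell,m}(f)$, read that vector (after the index reversal $q_j \leftrightarrow r_{m-j}$) as the coefficient vector of $Q$, and take $P$ to be the truncation of $Qf$ below degree $\ell+m$. The paper builds this reversal directly into how it writes the column dependence relation, while you isolate it explicitly, but the substance is identical.
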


\begin{proof} Suppose $\Delta_{\ell,m}=0$, so the columns of  $H_{\ell,m}$ are linearly dependent over $k$; i.e., there exist $q_0,\ldots ,q_m\in k$, not all $0$, and with $$q_0a_{\ell+r+m}+q_1a_{\ell+r+m-1}+\ldots +q_ma_{\ell+r}=0$$ for $r=0,1,\ldots ,m$.  

Setting $Q(z):=q_0+q_1z+\ldots +q_mz^m$, we have $Q(z)\neq 0$ and 
\begin{equation*}
Q(z)f(z)=\sum_{s=0}^\infty c_sz^s,\qquad  c_s=q_0a_s+q_{1}a_{s-1}+\cdots +q_ma_{s-m},
\end{equation*}
with the convention that  $a_n=0$ for $n<0$.  

Note now that the above equations give that $c_s=0$ for $s=\ell+m, \ldots ,\ell+2m$, hence 
setting $P(z)=c_0+c_1z+c_{\ell+m-1}z^{\ell+m-1}$, we have $\deg P\le \ell+m-1$, $\deg Q\le m$ and $\ord(P(z)-Q(z)f(z))\ge \ell+2m+1$, as required. The converse assertion is proved by simply reversing the argument. 
\end{proof}

In the sequel we shall let $\Delta_m(f)$ denote the Hankel determinant $\Delta_{0,m}(f)$. We have the following consequence.
\begin{corollary}\label{cor:H1}
Let $m$ and $d$ be nonnegative integers. If $\Delta_{m+i}(f)=0$ for $0\leq i\leq d$ then
there exist $P(z),Q(z)\in k[z]$ with $\deg P\leq m-1$,
$Q(0)\neq 0$, $\deg Q\leq m$, $\gcd(P,Q)=1$, and
$f(z)-P(z)/Q(z)=O(z^{m+d+1})$.
\end{corollary}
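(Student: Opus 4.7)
My plan is to apply Lemma~\ref{lem:H1} with $\ell=0$ at index $m$ to the hypothesis $\Delta_m(f)=0$, obtaining $\tilde P,\tilde Q\in k[z]$ with $\deg\tilde P\le m-1$, $\deg\tilde Q\le m$, $\tilde Q\ne 0$, and $\tilde P-\tilde Q f=O(z^{2m+1})$. Setting $D:=\gcd(\tilde P,\tilde Q)$ and $(P,Q):=(\tilde P/D,\tilde Q/D)$ yields a coprime pair preserving these degree bounds. I check $Q(0)\ne 0$ by coprimality: if $P\ne 0$ and $Q(0)=0$, then $z\mid Q$ but $z\nmid P$, so $(P-Qf)(0)=P(0)\ne 0$, whereas $D(P-Qf)=\tilde P-\tilde Q f=O(z^{2m+1})$ forces $\mathrm{ord}(P-Qf)\ge 2m+1-\deg D\ge m+2$, a contradiction; the residual case $P=0$ is handled by normalising $Q=1$ and reading off the required vanishing of $f$ directly from $\tilde Q f=O(z^{2m+1})$. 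Consequently $f-P/Q=(Qf-P)/Q=O(z^{2m+1})$, which when $d\le m$ already gives $f-P/Q=O(z^{m+d+1})$ and finishes the proof.

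For $d>m$ the order $2m+1$ falls short of $m+d+1$, and I would boost it using the additional hypotheses $\Delta_{m+i}(f)=0$ for $1\le i\le d$. I proceed by strong induction on $d$: the case $d-1$ supplies a coprime pair with $f-P/Q=O(z^{m+d})$, which, once $d-1\ge m$, is forced by uniqueness of the low-degree Pad\'e representative to coincide with $(P,Q)$ up to scalar. Writing $Qf-P=cz^{m+d}+O(z^{m+d+1})$, the task reduces to proving $c=0$ from $\Delta_{m+d}(f)=0$.

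To extract this vanishing, apply Lemma~\ref{lem:H1} at index $m+d$ to get $(\bar P,\bar Q)$ with $\deg\bar P\le m+d-1$, $\deg\bar Q\le m+d$, and $\bar P-\bar Q f=O(z^{2m+2d+1})$; then exploit the polynomial identity
\[
\bar P\,Q-\bar Q\,P=Q(\bar P-\bar Q f)-\bar Q(P-Qf).
\]
The left-hand side has degree $\le 2m+d-1$, while the right-hand side is the sum of a term of order $\ge 2m+2d+1$ and a term whose coefficient of $z^{m+d}$ is $-\bar Q(0)c$. The hard part is the Pad\'e block structure encoded in the consecutive vanishings $\Delta_{m+i}(f)=0$ for $0\le i\le d$: this structure lets one choose $(\bar P,\bar Q)$ as a polynomial multiple of $(P,Q)$, making $\bar P\,Q-\bar Q\,P\equiv 0$ and hence forcing $\bar Q(0)c=0$, from which $c=0$ once $\bar Q(0)\ne 0$. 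I would obtain the block structure by iterating Jacobi's identity for Hankel determinants, for example
\[
\Delta_{1,m+i}(f)^2=\Delta_{0,m+i}(f)\,\Delta_{2,m+i}(f)-\Delta_{0,m+i+1}(f)\,\Delta_{2,m+i-1}(f),
\]
to derive $\Delta_{j,m}(f)=0$ for $j=0,1,\ldots,d$, which combined with Lemma~\ref{lem:H1} at $\ell=j$ and coprime uniqueness identifies the Pad\'e pairs across the levels.
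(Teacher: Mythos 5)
Your plan diverges from the paper's and, as written, has a real gap. You invoke Lemma~\ref{lem:H1} only at the single index $m$, and for $d>m$ you try to bootstrap the approximation order by induction, reducing to the assertion that the Pad\'e pair $(\bar P,\bar Q)$ at level $m+d$ is a polynomial multiple of $(P,Q)$. You yourself flag this as ``the hard part'' and propose to derive it from a block structure of the Hankel table obtained by iterating a Jacobi-type determinant identity, but this is only sketched: the identity you quote is not checked, the chain of deductions leading from $\Delta_{0,m+i}=0$ ($0\le i\le d$) to $\Delta_{j,m}=0$ ($0\le j\le d$) is not carried out, and even granting it you would still need to control degrees and the constant term after reducing to coprime form. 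Separately, your first paragraph's treatment of the case $\tilde P=0$ is incorrect: from $\tilde Q f=O(z^{2m+1})$ with $\deg\tilde Q\le m$ you only get $f=O(z^{m+1})$, which is weaker than the required $f=O(z^{m+d+1})$ as soon as $d\ge 1$.

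The paper avoids all of this by applying Lemma~\ref{lem:H1} at \emph{every} index $m+i$ ($0\le i\le d$) from the outset, obtaining pairs $(P_i,Q_i)$ with $\deg P_i\le m+i-1$, $\deg Q_i\le m+i$, $P_i-Q_if=O(z^{2m+2i+1})$. The key observation you are missing is the cross-multiplication identity: $P_iQ_{i+1}-P_{i+1}Q_i=(P_i-Q_if)Q_{i+1}-(P_{i+1}-Q_{i+1}f)Q_i$ has order $\ge 2m+2i+1$ but degree $\le 2m+2i$, so it vanishes identically; hence $P_0/Q_0=\cdots=P_d/Q_d$ and
\begin{equation*}
f-\frac{P_0}{Q_0}=f-\frac{P_d}{Q_d}=\frac{1}{Q_d}\,O(z^{2m+2d+1})=O(z^{m+d+1}),
\end{equation*}
since $\ord Q_d\le\deg Q_d\le m+d$. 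Reducing $P_0/Q_0$ to lowest terms $P/Q$ preserves the degree bounds, and an order count (namely $\ord Q_0\le\ord P_0$, else the left side above would have negative order) shows $Q(0)\neq 0$. This is the simpler mechanism you should be looking for; it delivers the result directly without any induction on $d$, any uniqueness of Pad\'e approximants, or any Hankel determinant identity beyond Lemma~\ref{lem:H1} itself.
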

\begin{proof}
For $0\leq i\leq d$, Lemma~\ref{lem:H1} gives $P_i(z),Q_i(z)\in k[z]$ with $\deg P_i\leq m+i-1$, $Q_i\neq 0$, $\deg Q_i\leq m+i$ and $P_i(z)-Q_i(z)f(z)=O(z^{2m+2i+1})$. Therefore:
$$P_iQ_{i+1}-P_{i+1}Q_i=O(z^{2m+2i+1}).$$
Since the left-hand side has degree at most $2m+2i$, we have $P_iQ_{i+1}=P_{i+1}Q_i$ for $0\leq i\leq d-1$. We have
\begin{equation}\label{eq:f-P_0/Q_0}
f(z)-P_0(z)/Q_0(z)=f(z)-P_d(z)/Q_d(z)=\frac{1}{Q_d(z)}O(z^{2m+2d+1})=O(z^{m+d+1}).
\end{equation}
Express $\displaystyle\frac{P_0}{Q_0}=\frac{P}{Q}$ with $\gcd(P,Q)=1$.
Let $a=\ord Q_0$ then we have $a\leq \ord P_0$; otherwise the order function of the LHS of \eqref{eq:f-P_0/Q_0} has a negative value. This implies $Q(z)$ divides $Q_0(z)/z^a$. Hence $Q(0)\neq 0$ and this finishes the proof. 
\end{proof}

By letting $d\to\infty$ in Corollary~\ref{cor:H1}, we have the following well-known criterion for rationality by Kronecker:
\begin{corollary}[Kronecker's criterion]
The power series  $f(z)$ is in $k(z)$ if and only if $\Delta_n(f)=0$ for all sufficiently large $n$.
\end{corollary}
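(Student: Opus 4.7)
The plan is to deduce both implications directly from Lemma~\ref{lem:H1} and Corollary~\ref{cor:H1}, so there is almost nothing new to do; the argument is essentially a matter of packaging.

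For the ``only if'' direction, suppose $f\in k(z)$, and write $f=P/Q$ in lowest terms. Then $P(z)-Q(z)f(z)=0=O(z^N)$ for every $N$, so the ``if'' direction of Lemma~\ref{lem:H1} (applied with $\ell=0$) immediately gives $\Delta_m(f)=0$ for every $m$ satisfying $m\ge\deg P+1$ and $m\ge\deg Q$. This handles the forward direction essentially for free.

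For the ``if'' direction, suppose $\Delta_n(f)=0$ for all $n\ge n_0$. I would apply Corollary~\ref{cor:H1} with $m=n_0$ and each nonnegative integer $d$, obtaining polynomials $P_d,Q_d\in k[z]$ (a priori depending on $d$) with $\deg P_d\le n_0-1$, $\deg Q_d\le n_0$, $Q_d(0)\ne 0$, $\gcd(P_d,Q_d)=1$, and $f(z)-P_d(z)/Q_d(z)=O(z^{n_0+d+1})$. The one point requiring a word of justification is that these rational approximants stabilize as $d$ increases.

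To see this stabilization, for $d\le d'$ the difference $P_d/Q_d-P_{d'}/Q_{d'}$ has order of vanishing at $0$ at least $n_0+d+1$; since $Q_d(0)Q_{d'}(0)\ne 0$, this equals $\ord(P_dQ_{d'}-P_{d'}Q_d)$. But $P_dQ_{d'}-P_{d'}Q_d$ is a polynomial of degree at most $2n_0-1$, so as soon as $n_0+d+1>2n_0-1$, i.e.\ $d\ge n_0$, this polynomial must vanish, giving $P_d/Q_d=P_{d'}/Q_{d'}$. Hence there is a single rational function $R(z)=P(z)/Q(z)\in k(z)$ with $f(z)-R(z)=O(z^N)$ for every $N$, which forces $f=R\in k(z)$. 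The main (and only) obstacle is the brief stabilization argument; everything else is a direct quotation of the preceding results.
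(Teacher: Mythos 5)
Your proof is correct and follows essentially the same route as the paper: the paper simply says ``letting $d\to\infty$ in Corollary~\ref{cor:H1}'' gives the result, and you have filled in the stabilization step that this phrasing elides (in fact the proof of Corollary~\ref{cor:H1} already shows $P_0/Q_0=P_d/Q_d$ internally, but your degree-versus-order argument recovers this directly from the \emph{statement} of the corollary, which is a clean way to do it). Your handling of the ``only if'' direction via the ``if'' part of Lemma~\ref{lem:H1} is also the standard argument; the only quibble is that $d\ge n_0-1$ already suffices in the stabilization step, though $d\ge n_0$ is of course also fine.
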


%

\subsection{D-finite series and rational approximation}
Throughout this subsection, we let $k$ be a field of characteristic zero and let $f(z)\in k[[z]]$ be a (nonzero) D-finite power series over $k(z)$; explicitly:
\begin{equation}\label{eq:operator L}
\scrL f=0,\qquad \scrL=A_0(z)D^p+A_1(z)D^{p-1}+\ldots +A_p(z),\qquad D=\frac{d}{d z},
\end{equation}
where  $p={\rm dim}_{k(z)}{\rm Span}\{f,f',f'',\ldots \}$ and $A_i\in k[z]$
for $0\leq i\leq p$ with $A_0\neq 0$. Let $\delta=\max\{\deg(A_i):\ 0\leq i\leq p\}$.
\begin{lemma}\label{lem:matrix B}
Let $n$ be an integer with $n\geq p-1$, let $P_0(z),\ldots ,P_n(z)$ be polynomials in $k[z]$ of degree at most $n$, and let $$g(z)= \sum_{i=0}^n P_i(z) f^{(i)}(z).$$ 
If
$${\rm dim}_{k(z)}{\rm Span}\{g,g',g'',\ldots \}=p,$$ 
then
there is a $p\times p$ matrix $B(z)$ with rational function entries whose numerators and denominators are bounded in degree by $(2\delta+1)np$ such that $$B(z)[g(z),g'(z),\ldots, g^{(p-1)}(z)]^T = [f(z),f'(z),\ldots ,f^{(p-1)}(z)]^T.$$
\end{lemma}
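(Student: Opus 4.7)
The plan is to construct explicitly a matrix $N\in\mathrm{Mat}_p(k(z))$ satisfying
\[
[g,g',\ldots,g^{(p-1)}]^T = N\cdot[f,f',\ldots,f^{(p-1)}]^T,
\]
and then take $B:=N^{-1}$. The hypothesis that $g,g',g'',\ldots$ span a $p$-dimensional $k(z)$-vector space forces $g,g',\ldots,g^{(p-1)}$ to be $k(z)$-linearly independent, so $\det N\neq 0$ and $B$ exists. All of the real work will be in tracking denominators and numerator degrees of the entries of $N$, and then of $B=\mathrm{adj}(N)/\det(N)$.

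To assemble $N$, I would first use $\scrL f=0$ to produce the companion matrix $C\in\mathrm{Mat}_p(k(z))$ for the system $\mathbf{v}'=C\mathbf{v}$, where $\mathbf{v}:=[f,f',\ldots,f^{(p-1)}]^T$; clearing the common denominator $A_0$ yields that $A_0C$ is a polynomial matrix with entries of degree at most $\delta$. Setting $M_0:=I$ and $M_{i+1}:=M_i'+M_iC$ gives $\mathbf{v}^{(i)}=M_i\mathbf{v}$, and a short induction on $i$ shows that $A_0^iM_i$ is a polynomial matrix with entries of degree at most $i\delta$. Since $f^{(i)}$ is the first entry of $M_i\mathbf{v}$, the identity $g=\sum_{i=0}^nP_if^{(i)}$ takes the form $g=\mathbf{b}_0^T\mathbf{v}$, and using $\deg P_i\leq n$ one checks that $A_0^n\mathbf{b}_0$ is polynomial of degree at most $n(1+\delta)$. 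Setting $\mathbf{b}_{k+1}:=\mathbf{b}_k'+C^T\mathbf{b}_k$ inductively yields $g^{(k)}=\mathbf{b}_k^T\mathbf{v}$ with $A_0^{n+k}\mathbf{b}_k$ polynomial of degree at most $n(1+\delta)+k\delta$. Taking $\mathbf{b}_0^T,\ldots,\mathbf{b}_{p-1}^T$ as the rows of $N$ gives the displayed relation, and $\widetilde N:=A_0^{n+p-1}N$ is a polynomial matrix whose entries have degree at most $n(1+\delta)+(p-1)\delta$.

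To conclude, Cramer's rule gives $B=A_0^{n+p-1}\mathrm{adj}(\widetilde N)/\det(\widetilde N)$, so each entry of $B$ is a ratio of two polynomials. The denominator $\det(\widetilde N)$ has degree at most $p\bigl[n(1+\delta)+(p-1)\delta\bigr]=pn+pn\delta+p(p-1)\delta$, while each numerator $A_0^{n+p-1}\mathrm{adj}(\widetilde N)_{ij}$ has degree at most $(n+p-1)\delta+(p-1)\bigl[n(1+\delta)+(p-1)\delta\bigr]=pn\delta+(p-1)n+p(p-1)\delta$. A final arithmetic check shows that both quantities are at most $(2\delta+1)np$; this is precisely the point at which the hypothesis $n\geq p-1$ enters, since it is what allows the quadratic-in-$p$ term $p(p-1)\delta$ to be absorbed into $pn\delta$. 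The argument is not deep—it is essentially Cramer's rule applied to the cyclic-vector matrix $N$—but the degree bookkeeping is the main obstacle and must be executed cleanly so that the condition $n\geq p-1$ is just enough to make the bound $(2\delta+1)np$ go through.
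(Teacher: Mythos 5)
Your proof is correct and follows essentially the same route as the paper: you form the forward matrix $N$ with $[g,g',\ldots,g^{(p-1)}]^T = N[f,f',\ldots,f^{(p-1)}]^T$, invoke the dimension hypothesis to get $\det N \neq 0$, and take $B = N^{-1}$ via the adjugate, with $n \geq p-1$ doing precisely the same job in the final degree arithmetic. The only difference is internal bookkeeping --- the paper expands $g^{(j)}$ directly with the Leibniz rule and the relation \eqref{eq:f^(p+j)}, whereas you organize the same computation through the companion matrix recursion $\mathbf{b}_{k+1}=\mathbf{b}_k'+C^T\mathbf{b}_k$ --- yielding a slightly coarser but still sufficient bound.
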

\begin{proof}
We have 
$f^{(p)}(z) = -\displaystyle\sum_{i=0}^{p-1} (A_{p-i}(z)/A_0(z)) f^{(i)}(z)$.  Then a straightforward induction shows that for $j\ge 0$ we have
\begin{equation}\label{eq:f^(p+j)}
f^{(p+j)}(z) = \sum_{i=0}^{p-1} (A_{p-i,j}(z)/A_0(z)^{j+1}) f^{(i)}(z),
\end{equation}
where the $A_{p-i,j}$'s are polynomials of degree at most $\delta(j+1)$.

Let $j\in\{0,\ldots,p-1\}$, we have 
$$g^{(j)}= D^j\left(\sum_{k=0}^n P_kf^{(k)}\right)=\sum_{k=0}^{n}D^j (P_kf^{(k)})=\sum_{k=0}^n\sum_{\ell=0}^j \binom{j}{\ell}(D^{j-\ell}P_k)\cdot f^{(k+\ell)}.$$
For this last double sum, when $k+\ell\geq p$ we use \eqref{eq:f^(p+j)} and the inequaltiy $\deg(D^{j-\ell}P_k)\leq n$ to express $(D^{j-\ell}P_k)\cdot f^{(k+\ell)}$ as
$$\sum_{i=0}^{p-1}\frac{U_i(z)}{A_0(z)^{k+\ell-p+1}}f^{(i)}$$
where $U_i(z)$ has degree at most $n+\delta(k+\ell-p+1)$. With the extreme case $(k,\ell)=(n,j)$ in mind, we may take $A_0(z)^{n+j-p+1}$ as a common denominator and express:
$$g^{(j)}=\sum_{k=0}^n\sum_{\ell=0}^j \binom{j}{\ell}(D^{j-\ell}P_k)\cdot f^{(k+\ell)}=\sum_{i=0}^{p-1} C_{p-i,j}(z)/A_0(z)^{n+j-p+1} f^{(i)}(z),$$ where $C_{p-i,j}$ has degree at most $n + \delta(n+j-p+1)$.
Then
$$[g(z),g'(z),\ldots, g^{(p-1)}(z)]^T = C(z)[f(z),f'(z),\ldots ,f^{(p-1)}(z)]^T,$$ where $C(z)$ is the $p\times p$ matrix whose $(i,j)$-entry is
$C_{p-j,i}(z)/A_0(z)^{n+i-p+1}$ for $0\leq i,j\leq p-1$.  Observe that $C(z)$ is invertible, since otherwise there would be a nonzero row vector $w(z)\in k(z)^{p}$ such that $w(z) C(z)=0$, which would give that $g,g',\ldots ,g^{(p-1)}$ are linearly dependent over $k(z)$, contradicting the
assumption that
$$ {\rm dim}_{k(z)}{\rm Span}\{g,g',g'',\ldots \}=p.$$ 

The determinant of $C(z)$ has the form:
$$\det(C(z))=\frac{P(z)}{A_0(z)^{p(n-p+1)+p(p-1)/2}}$$
where $P(z)\in k[z]$ with $\deg P\leq np+\delta p(n-p+1)+\delta p(p-1)/2$. For $0\leq i,j\leq p-1$,
we remove the row $[C_{i,0},C_{i,1},\ldots,C_{i,p-1}]$ and the column $[C_{0,j},C_{1,j},\ldots,C_{p-1,j}]^T$ from the matrix $C(z)$ then the determinant of the resulting matrix has the form:
$$\frac{P_{i,j}(z)}{A_0(z)^{(p-1)(n-p+1)+p(p-1)/2-i}}$$
where $P_{i,j}(z)\in k[z]$ with $\deg P_{i,j} \leq n(p-1)+\delta(p-1)(n-p+1)+\delta((p(p-1)/2)-i)$. Let $B(z)$ be the inverse of the matrix $C(z)$. We have that each entry of $B(z)$ is a rational function 
whose numerator and denominator are bounded in degree by:
$$np+\delta p(n-p+1)+\delta p(p-1)/2 \leq (2\delta+1)np.$$  
\end{proof}

We conclude this subsection with a rational approximation principle.  We recall that we are assuming our base field $k$ has characteristic zero, which is a needed hypothesis in the following result.  
\begin{lemma}\label{lem:f=P/Q}
Suppose that $P,Q\in k[z]$ are polynomials  of degree $\le \ell$ such that $Q(0)\neq 0$ and $P-Qf=O(z^N)$, where 
$N>(p+2)\ell+\delta+p$. Then  we have $\scrL(P/Q)=0$. 
Moreover, there is an effectively computable subset $T$ of $\N$ with $|T|\le p$, depending only on $\scrL$, such that if $\ord (P-Qf)\not\in T$ then we must have in fact $f=P/Q$.
\end{lemma}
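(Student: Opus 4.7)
My plan is to set $h := P/Q - f$, which lies in $k[[z]]$ because $Q(0) \ne 0$, and note that $\ord h = \ord(P - Qf) \ge N$. The proof then splits into two steps: first, force $\scrL(P/Q) = 0$ by comparing a polynomial degree bound against an order estimate for $\scrL h$; second, use the resulting fact that $h \in \ker \scrL$ to deduce $h = 0$ from a finite ``exceptional orders'' obstruction.

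For the first step, I would write $(P/Q)^{(j)} = R_j/Q^{j+1}$ with $R_0 = P$, using the recursion $R_{j+1} = R_j' Q - (j+1)\, R_j\, Q'$ to show inductively that $\deg R_j \le (j+1)\ell - j$. Clearing denominators in $\scrL(P/Q) = \sum_{i=0}^p A_i\, (P/Q)^{(p-i)}$ produces the polynomial
$$S(z) \;:=\; Q(z)^{p+1}\, \scrL(P/Q) \;=\; \sum_{i=0}^p A_i(z)\, Q(z)^i\, R_{p-i}(z)$$
of degree at most $\delta + (p+1)\ell$. On the other hand, since $\scrL f = 0$, $\scrL(P/Q) = \scrL h$, and the worst contribution $A_0 h^{(p)}$ forces $\scrL h = O(z^{N-p})$; combined with $Q(0) \ne 0$ this yields $\ord S \ge N - p$. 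The hypothesis $N > (p+2)\ell + \delta + p$ then gives $N - p > (p+1)\ell + \delta \ge \deg S$, forcing $S \equiv 0$ and hence $\scrL(P/Q) = 0$.

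For the second step, $\scrL h = \scrL(P/Q) - \scrL f = 0$, so $h$ lies in the $k$-vector space $V := \ker \scrL \cap k[[z]]$. Since $k$ is the field of constants of $(k((z)), D)$ and $\scrL$ has order $p$, the standard Wronskian argument gives $\dim_k V \le p$. By repeated Gaussian elimination on leading coefficients, I can choose a basis $h_1, \ldots, h_q$ of $V$ (with $q \le p$) satisfying $\ord h_1 < \cdots < \ord h_q$; then every nonzero element of $V$ has order equal to exactly one $\ord h_j$, namely the smallest index appearing in its basis expansion. Taking $T := \{\ord h_1, \ldots, \ord h_q\}$, which depends only on $\scrL$, has size at most $p$, and is effectively computable by solving the linear recurrence that $\scrL$ imposes on power series coefficients, I conclude: if $\ord(P - Qf) = \ord h \notin T$ then $h = 0$, that is, $f = P/Q$.

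The one piece of genuine bookkeeping is the degree estimate for $R_j$ and its matching against the hypothesis $N > (p+2)\ell + \delta + p$; beyond that, the argument is a clean combination of a polynomial identity clearing the denominator $Q^{p+1}$ and the classical dimension bound for formal solutions of an order-$p$ linear ODE. I do not expect a conceptual obstacle, only careful constant-tracking in the first step.
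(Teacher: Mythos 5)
Your proof is correct. Step one (forcing $\scrL(P/Q)=0$) is essentially the paper's own argument: both compare an order bound for $\scrL(h)$ against a degree bound for the numerator of $\scrL(P/Q)$. Your bookkeeping is a bit tighter — you get $\deg S \le (p+1)\ell+\delta$ via the recursion $R_{j+1}=R_j'Q-(j+1)R_jQ'$, whereas the paper simply quotes the looser bound $(p+2)\ell+\delta$ — but since the hypothesis $N>(p+2)\ell+\delta+p$ accommodates the looser bound, both arrive at $\deg S<\ord S$ and hence $S\equiv 0$.

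Step two is where you genuinely diverge from the paper, and it is worth noting the trade-off. The paper works directly with $h=z^mR$, $R(0)\ne 0$, extracts the lowest-order term of each $A_i(z)D^{p-i}(z^mR)$, namely $R(0)\alpha_i(p-i)!\binom{m}{p-i}z^{m-p+i+e_i}$ where $\alpha_iz^{e_i}$ is the lowest-order term of $A_i$, and observes that the indices $i$ achieving the minimal exponent form a fixed set $I$ (independent of $m$ once $m\ge p$). Vanishing of the lowest-order sum then forces $\sum_{i\in I}\alpha_i(p-i)!\binom{m}{p-i}=0$, a nonzero polynomial in $m$ of degree at most $p$; the paper takes $T$ to be its set of natural-number roots. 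You instead invoke the Wronskian lemma to bound $\dim_k(\ker\scrL\cap k[[z]])\le p$ and then do Gaussian elimination on leading coefficients to produce a basis with distinct orders; your $T$ is that set of orders. Both produce a finite $T$ of size at most $p$ with the required absorbing property, so both are correct. The paper's route hands you an explicit polynomial whose roots bound $T$, which makes the ``effectively computable'' clause completely transparent; your route is more conceptual and produces the minimal such $T$, but the computability claim — ``by solving the linear recurrence'' — is left a bit soft, since in principle you must decide, for each candidate order, whether a genuine power-series solution starting there exists (a finite but nontrivial consistency check at the singular indices of the recurrence). This is decidable, so your claim is morally fine, but the paper's version makes it immediate. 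If you wanted to make your version airtight on this point without changing the structure, you could simply note that your $T$ is contained in the set of roots of the paper's indicial-type polynomial, and pass to that larger (but still $\le p$ element, explicitly computable) set.
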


\begin{proof}  
If $Qf=P$ we are done, so assume this is not the case. Since $Q(0)\neq 0$ we may write $\displaystyle f-P/Q=z^mR$ with  
$m=\ord(P-Qf)$ and $R\in k[[z]]$ satisfying $R(0)\neq 0$.

We have $\scrL(P/Q)=-\scrL(z^mR)$. The right-hand side is a power series of order $\ge m-p$. The left-hand side is a rational function whose numerator degree  is bounded above by  $ (p+2)\ell +\delta$.  This numerator  must be divisible by $z^{m-p}$, thus must vanish since $m\ge N>(p+2)l+\delta+p$ by assumption.  This proves $\scrL(P/Q)=0$.

For the second assertion, we use that $\scrL(z^mR)=0$ as follows. Note that the term of lowest order in  $A_i(z)D^{p-i}(z^mR(z))$ is  of the form $$R(0)\alpha_i(p-i)!{m\choose p-i}z^{m-p+i+e_i},$$ where  $\alpha_iz^{e_i}$ is the  term of lowest order in $A_i(z)$ (i.e. $e_i=\ord A_i$).  Since the sum of the  terms  of lowest order in $$\displaystyle\sum_{i=0}^pA_i(z)D^{p-i}(z^mR)$$ vanishes, the sum of such terms with the property that $m-p+i+e_i$ assumes the minimum value, say equal to $\mu$,  must also vanish.

Let $I\subseteq \{0,1,\ldots ,p\}$ be  the subset of indices $i$ such that $m-p+i+e_i=\mu$; note that,  for $m\ge p$,  $I$ does not depend on $m$ and corresponds to $i+e_i$ being minimal. We then have 
$$\displaystyle\sum_{i\in I}\alpha_i(p-i)!{m\choose p-i}=0.$$  Since $\characteristic(k)=0$, the left side is a nonzero polynomial in $m$, of degree at most $p$, depending explicitly on the $A_i(z)$.  This polynomial has at most $p$ roots, and if $m$ is outside the set of these roots we have a contradiction, which in fact proves that $f=P/Q$. 
\end{proof}

\subsection{P\'olya's inequality}
In this subsection, we let $G\subsetneq \C$ be a simply connected domain containing $0$ with conformal radius $\rho>1$ from the origin. This means
we have a (unique) conformal map $\varphi$ from $G$ onto
the disk $D(0,\rho)$ with $\varphi(0)=0$ and $\varphi'(0)=1$. Let $r\in (1,\rho)$ and let
$\Gamma=\varphi^{-1}(\partial D(0,r))$. For a (complex-valued) continuous function $f$ on
$G$, let
$$\Vert f\Vert_{\Gamma}=\max\{\vert f(z)\vert:\ z\in \Gamma\}.$$
A more general version of P\'olya's inequality treats functions with
isolated singularities, the following version
for analytic functions is sufficient for our purpose:
 
\begin{theorem}[P\'olya's inequality]\label{thm:Polya}
Let $f(z)$ be an analytic function on $G$ with the Taylor series $f(z)=\sum_{n=0}^{\infty} a_n z^n$ at $0$.  Then there 
exists a constant $C>0$ depending only
on the data $(G,r)$ such that 
$$\vert \Delta_n(f)\vert\le (n+1)! (C\Vert f\Vert_{\Gamma})^{n+1}r^{-n(n+1)} \text{for every $n\in\N_0$,}$$ where 
we recall that $\Delta_n(f)=\Delta_{0,n}(f)$ is defined in Subsection~\ref{subsec:Hankel}. 
\end{theorem}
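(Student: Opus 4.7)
The approach is the classical Heine--Andr\'eief method combined with a conformal change of variable. I would first apply Cauchy's integral formula on $\Gamma$ to write
\[
a_k \;=\; \frac{1}{2\pi i}\oint_\Gamma \frac{f(z)}{z^{k+1}}\,dz,
\]
and then use the Andr\'eief/Heine identity with $\phi_i(z)=z^{-i}$ and the complex measure $d\mu(z)=f(z)/(2\pi i z)\,dz$ on $\Gamma$ (so that $a_{i+j}=\int\phi_i\phi_j\,d\mu$), to rewrite the Hankel determinant as the multiple contour integral
\[
\Delta_n(f)\;=\;\frac{1}{(n+1)!\,(2\pi i)^{n+1}}\int_{\Gamma^{n+1}}\Bigl(\prod_{0\le i<k\le n}(z_k^{-1}-z_i^{-1})\Bigr)^{\!2}\prod_{k=0}^n\frac{f(z_k)}{z_k}\,dz_k.
\]

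Next I would change variables $w_k=\varphi(z_k)$, transforming each contour into the circle $\{|w|=r\}$. The normalization $\varphi(0)=0$, $\varphi'(0)=1$ allows one to factor the inverse as $\psi=\varphi^{-1}$ with $\psi(w)=w\,p(w)$ for some $p$ analytic and non-vanishing on $D(0,\rho)$, and gives the clean factorization
\[
z_k^{-1}-z_i^{-1}\;=\;\frac{\psi(w_i)-\psi(w_k)}{\psi(w_k)\psi(w_i)}\;=\;(w_k-w_i)\,R(w_i,w_k),
\]
where $R(w_i,w_k)=-\tilde\psi(w_i,w_k)/(\psi(w_i)\psi(w_k))$ and $\tilde\psi$ is the (analytic) divided difference of $\psi$. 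On $\{|w|=r\}$, Koebe-type distortion estimates (valid because $r<\rho$) yield uniform bounds for $|\psi|$, $|\psi'|$, $|p|^{\pm 1}$ and for $|R|$ in terms of constants depending only on $(G,r)$. Combining these with $|f|\le\|f\|_\Gamma$ on $\Gamma$ and the Weyl integration formula
\[
\int_{[0,2\pi]^{n+1}}\prod_{0\le i<k\le n}\bigl|e^{i\theta_k}-e^{i\theta_i}\bigr|^{2}\,d\theta_0\cdots d\theta_n\;=\;(n+1)!\,(2\pi)^{n+1},
\]
one may estimate the integral. The exponent $r^{-n(n+1)}$ emerges from careful tracking of $r$-powers: $r^{n(n+1)}$ from $\prod|w_k-w_i|^2=r^{n(n+1)}\prod|e^{i\theta_k}-e^{i\theta_i}|^2$, an opposing $r^{-2n(n+1)}$ from the kernel bound $|R|\le C_1/r^{2}$, and a neutral $r$-contribution from the measure $\prod|dw_k|$ against the factor $\prod 1/|\psi(w_k)|$.

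The hard part is the bookkeeping: collecting all the geometric constants from the change of variable so they appear in the $(n+1)$-power form $(C\|f\|_\Gamma)^{n+1}$ rather than in a worse $C^{n(n+1)}$ form. After the $(n+1)!$ from Weyl's formula has cancelled the $1/(n+1)!$ in Heine's identity, the surplus $(n+1)!$ in the statement provides precisely the slack needed to absorb the residual polynomial-in-$n$ losses from the Koebe distortion estimates (which worsen as $r\to\rho$). All remaining uniform bounds are then packaged into a single constant $C=C(G,r)$, yielding the stated inequality $|\Delta_n(f)|\le(n+1)!\,(C\|f\|_\Gamma)^{n+1}\,r^{-n(n+1)}$.
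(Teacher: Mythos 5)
The paper does not actually supply a proof of this theorem: it simply cites Bieberbach \cite[pp.~121--124]{Bie55_AF}, where a more general version (allowing isolated singularities) is proved. So you are not duplicating or diverging from an argument in the paper; you are attempting a proof from scratch.

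Your Heine--Andr\'eief reduction to the multiple contour integral and the conformal substitution $w_k=\varphi(z_k)$ are a sensible opening, and the identity
$\Delta_n(f)=\frac{1}{(n+1)!(2\pi i)^{n+1}}\int_{\Gamma^{n+1}}\prod_{i<k}(z_k^{-1}-z_i^{-1})^2\prod_k\frac{f(z_k)}{z_k}\,dz_k$
is correct. The genuine gap is in the very step you flag as ``bookkeeping.'' After the change of variables, the integrand contains $\prod_{0\le i<k\le n}\lvert R(w_i,w_k)\rvert^2$, and there are $\binom{n+1}{2}=n(n+1)/2$ pairs. A uniform pointwise bound $\lvert R\rvert\le C_1/r^2$ therefore produces a factor $C_1^{\,n(n+1)}$ (in addition to the $r^{-2n(n+1)}$ you track). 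For a general simply connected $G$ the constant $C_1=C_1(G,r)$ is strictly greater than $1$: writing $\psi=\varphi^{-1}$ and using the distortion theorem, $\lvert\psi\rvert$ on $\lvert w\rvert=r$ is only comparable to $r$ up to factors like $(1\pm r/\rho)^{\pm 2}$, and $\tilde\psi$ is only bounded, not $\le 1$; equality $C_1=1$ occurs essentially only when $G$ is already a disk. A factor $C_1^{\,n(n+1)}=e^{\Theta(n^2)}$ cannot be absorbed by the surplus $(n+1)!=e^{O(n\log n)}$. So the claim that ``the surplus $(n+1)!$ provides precisely the slack needed to absorb the residual polynomial-in-$n$ losses'' misidentifies the size of the loss by an order of magnitude (it is super-exponential in $n$, not polynomial), and the argument as written does not close.

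To make the Heine--Andr\'eief route work one has to avoid the crude pointwise bound on the off-diagonal kernel. The standard device is to use the invariance of $\Delta_n(f)$ under unitriangular changes of basis: replacing the monomials $z^{-i}$ in the Gram-type pairing by Laurent polynomials $L_i(z)=z^{-i}+\text{(lower order)}$ chosen so that $L_i\approx\varphi(z)^{-i}$ on $\Gamma$ (a Faber-type construction), which makes $\det\bigl(L_i(z_j)\bigr)$ essentially a Vandermonde in the $w$-variables with constants entering only to the $(n+1)$-st power. Without some such renormalization of the basis, the $C^{n(n+1)}$ loss is unavoidable and the stated bound does not follow. Given that the paper simply defers to Bieberbach, the cleanest fix is probably to do the same, or else to carry out the Faber-polynomial normalization explicitly rather than the naive change of variables.
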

\begin{proof} The more general version in which $f$ can have isolated singularities is proved in \cite[pp.~121--124]{Bie55_AF}. 
\end{proof}

For the proof of Theorem~\ref{thm:main thm}, we need the following estimate when applying P\'olya's inequality for certain auxiliary functions $g$ constructed from $f$:
\begin{corollary}\label{cor:cor of Polya's inequality}
Let $f(z)$ be an analytic function on $G$ with the Taylor series $f(z)=\sum_{n=0}^{\infty} a_n z^n$ at $0$. There exist  constants $C_1>0$
depending on $(G,r,f)$ and $C_2\ge 1$ depending 
on $(G,r)$ such that 
for every $d\ge 0$ and $c_0,\ldots ,c_d\in \mathbb{C}$, we have
$$\vert\Delta_n(g)\vert \le (n+1)! C_1^{n+1} \left( \sum_{j=0}^d |c_j| j! C_2^{j}\right)^{n+1} r^{-n(n+1)}$$ for every $n\in \N_0$, where
$g(z):= \sum_{j=0}^d c_j z^j f^{(j)}(z)$.
\end{corollary}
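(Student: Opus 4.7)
The plan is to apply P\'olya's inequality (Theorem~\ref{thm:Polya}) directly to $g$, so everything comes down to producing a clean upper bound for $\|g\|_\Gamma$ in terms of the $|c_j|$'s and factorials.

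First, since $g=\sum_{j=0}^d c_j z^j f^{(j)}(z)$, the triangle inequality gives
\[
\|g\|_\Gamma \;\le\; \sum_{j=0}^d |c_j|\,\|z^j f^{(j)}(z)\|_\Gamma,
\]
so I would focus on bounding each $\|z^j f^{(j)}(z)\|_\Gamma$ uniformly in $j$. Note that $\Gamma=\varphi^{-1}(\partial D(0,r))$ is a compact subset of $G$; fix some $r'\in(r,\rho)$ and set $\Gamma'=\varphi^{-1}(\partial D(0,r'))$. Then $\Gamma'$ is a compact curve in $G$ enclosing $\Gamma$, and the distance $\delta:=\mathrm{dist}(\Gamma,\Gamma')>0$ and the radius $R:=\max_{z\in\Gamma}|z|$ are finite positive quantities depending only on $(G,r)$ (after fixing $r'$). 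Also set $M_f:=\|f\|_{\Gamma'}$, which depends on $(G,r,f)$.

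Next, by the Cauchy integral formula applied along $\Gamma'$,
\[
f^{(j)}(z)\;=\;\frac{j!}{2\pi i}\int_{\Gamma'}\frac{f(w)}{(w-z)^{j+1}}\,dw\qquad(z\in \Gamma),
\]
which yields the estimate
\[
|f^{(j)}(z)|\;\le\;\frac{j!\,L\,M_f}{2\pi\,\delta^{j+1}}\qquad(z\in\Gamma),
\]
where $L$ is the length of $\Gamma'$. Multiplying by $|z|^j\le R^j$ gives
\[
\|z^j f^{(j)}(z)\|_\Gamma\;\le\;\frac{L\,M_f}{2\pi\,\delta}\cdot j!\cdot (R/\delta)^j.
\]
Set $C_2:=\max(R/\delta,1)\ge 1$, which depends only on $(G,r)$, and $M:=L M_f/(2\pi\delta)$, which depends on $(G,r,f)$. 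Then
\[
\|g\|_\Gamma\;\le\;M\sum_{j=0}^d|c_j|\,j!\,C_2^{\,j}.
\]

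Finally, applying P\'olya's inequality (Theorem~\ref{thm:Polya}) to the analytic function $g$ on $G$ (with the same $G$, $r$, $\Gamma$),
\[
|\Delta_n(g)|\;\le\;(n+1)!\,(C\|g\|_\Gamma)^{n+1}\,r^{-n(n+1)}
\;\le\;(n+1)!\,(CM)^{n+1}\!\left(\sum_{j=0}^d|c_j|\,j!\,C_2^{\,j}\right)^{\!n+1}\!r^{-n(n+1)},
\]
and the corollary follows by taking $C_1:=CM$ (depending on $(G,r,f)$) and $C_2$ as above. The only mild subtlety is the choice of the auxiliary contour $\Gamma'$ and keeping track of which constants depend on $f$ versus only on the pair $(G,r)$; neither step presents a real obstacle.
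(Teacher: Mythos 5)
Your proof is correct and follows essentially the same route as the paper: introduce an auxiliary contour $\Gamma'=\varphi^{-1}(\partial D(0,r'))$ with $r<r'<\rho$, use Cauchy's integral formula along $\Gamma'$ to bound $|f^{(j)}|$ on $\Gamma$ by $j!$ times a geometric factor, and then apply P\'olya's inequality to $g$. The only cosmetic difference is that the paper fixes the specific choice $r'=(r+\rho)/2$ (so that the dependence of the intermediate constants on $(G,r)$ is automatic), whereas you fix an arbitrary $r'\in(r,\rho)$; to keep the claimed dependence of $C_2$ on $(G,r)$ alone you should make this choice canonical, e.g.\ $r'=(r+\rho)/2$, exactly as the paper does.
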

\begin{proof}
In order to apply Theorem~\ref{thm:Polya}, we need an upper bound for $\Vert g\Vert_{\Gamma}$ as follows. Let $r'=(r+\rho)/2 \in (r,\rho)$ and let $\Gamma'=\varphi^{-1}(\partial D(0,r'))$. Let $$\kappa:=\min\{\vert z-z'\vert:\ z\in\Gamma,\ z'\in\Gamma'\}$$ denote the distance between $\Gamma$ and $\Gamma'$. For $z_0\in \Gamma$, Cauchy's formula gives:
$$f^{(j)}(z_0) = \frac{j!}{2\pi i} \int_{\Gamma'} f(z)/(z-z_0)^{j+1} \, dz$$ and so
$$\vert f^{(j)}(z_0)\vert \le  (2\pi)^{-1} j! \Vert f\Vert_{\Gamma'} \kappa^{-j-1} {\rm length}(\Gamma'),$$
where we may, for example, use the parametrization $\psi:\ [0,1]\rightarrow \Gamma'$ given by $\psi(t)=\varphi^{-1}(r'\exp(2\pi it))$ and have ${\rm length}(\Gamma')=\int_{0}^1\vert\psi'(t)\vert\,dt$. 
Consequently, $$\Vert g\Vert_{\Gamma}\leq \displaystyle C_3 \left( \sum_{j=0}^d |c_j| j! M^j \kappa^{-j-1}\right)$$ for some constant $C_3$ depending only on $(G,r,f)$, where $M=\Vert z\Vert_{\Gamma}$.   Let $C$ be the resulting constant in Theorem~\ref{thm:Polya}, we now take $C_1=CC_3\kappa^{-1}$ and $C_2=\max(1,M\kappa^{-1})$ then apply Theorem~\ref{thm:Polya} to finish the proof. 
\end{proof}

\section{Proof of Theorem~\ref{thm:main thm}}\label{sec:proof}
In this section we prove our main result. We begin with a simple lemma.
\begin{lemma} Let $L$ and $n$ be natural numbers with $n>4L$ and let $i_1,\ldots ,i_L$ be integers with $$1\le i_1< i_2 < \cdots < i_L \leq n.$$  Then there exist
$c_0,\ldots ,c_{2L}\in \mathbb{Z}$, not all zero, with $\vert c_i\vert < n {n\choose 2L}$ for $i=0,\ldots ,2L$ such that
$$\sum_{i=0}^{2L} c_i {m\choose i} = 0$$ for $m=i_1,\ldots ,i_L$.
\label{lem:Siegel}
\end{lemma}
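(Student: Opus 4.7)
The plan is to recognize this as a direct application of Siegel's lemma to the homogeneous linear system
$$\sum_{j=0}^{2L} c_j \binom{i_k}{j} = 0, \qquad k = 1, \ldots, L,$$
viewed as $L$ equations in the $2L+1$ integer unknowns $c_0, \ldots, c_{2L}$. Since the number of unknowns strictly exceeds the number of equations, a nonzero rational (hence integer) solution exists, and Siegel's lemma will furnish one whose coordinates are bounded in a quantitative way by the size of the coefficient matrix.

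First I would bound the entries of the coefficient matrix. For $0 \le j \le 2L$ and $1 \le i_k \le n$, monotonicity in the top argument gives $\binom{i_k}{j} \le \binom{n}{j}$. The hypothesis $n > 4L$ yields $2L \le n/2$, so $\binom{n}{j}$ is nondecreasing in $j$ over $\{0, 1, \ldots, 2L\}$, whence $\binom{n}{j} \le \binom{n}{2L}$. Thus every entry of the coefficient matrix is bounded in absolute value by $B := \binom{n}{2L}$.

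Next I would invoke the standard form of Siegel's lemma: with $M = L$ equations, $N = 2L+1$ unknowns, and entry bound $B$, there is a nonzero integer solution $(c_0, \ldots, c_{2L})$ satisfying
$$\max_{0 \le j \le 2L} |c_j| \le \bigl((2L+1) B\bigr)^{L/(L+1)}.$$
Since $(2L+1)B \ge 1$ and the exponent $L/(L+1)$ is at most $1$, the right-hand side is at most $(2L+1)B$. The hypothesis $n > 4L$ forces $n \ge 4L+1 > 2L+1$ when $L \ge 1$, and therefore $(2L+1)B < nB = n\binom{n}{2L}$, giving the required strict bound. The trivial case $L = 0$ is handled by taking $c_0 = 1$, which satisfies $1 < n = n\binom{n}{0}$.

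There is no serious obstacle here; the entire argument is a clean application of Siegel's lemma, with the assumption $n > 4L$ entering in two places, namely to ensure $2L \le n/2$ so that $\binom{n}{2L}$ dominates all relevant binomial coefficients, and to produce the strict inequality $2L+1 < n$ needed to sharpen the Siegel bound into the stated $n\binom{n}{2L}$. If one wished to avoid citing Siegel's lemma as a black box, the same conclusion can be reached directly by a pigeonhole argument applied to the image of the cube $\{0, 1, \ldots, H\}^{2L+1}$ under the linear map $(c_j) \mapsto \bigl(\sum_j c_j \binom{i_k}{j}\bigr)_k$, for a suitable integer $H$ slightly below the target bound.
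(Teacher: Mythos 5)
Your proof is correct and follows essentially the same route as the paper's: bound the entries of the $L \times (2L+1)$ coefficient matrix by $\binom{n}{2L}$ using $n > 4L$, then invoke Siegel's lemma to produce a small nonzero integer solution, and finally check that $\bigl((2L+1)\binom{n}{2L}\bigr)^{L/(L+1)} < n\binom{n}{2L}$. The only difference is that you spell out the last inequality (and the trivial $L=0$ case) in more detail than the paper, which simply asserts it.
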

\begin{proof}
Since $n>4L$, the maximum of ${m\choose i}$ with $i\le 2L$ and $m\in \{i_1,\ldots ,i_L\}$ is ${n\choose 2L}$.  Then by Siegel's lemma \cite[p.~72]{BG06_HI}, there exist $c_0,\ldots ,c_{2L}\in \mathbb{Z}$ not all zero, with $$\vert c_i\vert< \left((2L+1)\cdot{n\choose 2L}\right)^{L/(L+1)} < n {n\choose 2L}$$ such that $$\sum_{i=0}^{2L} c_i {m\choose i} = 0$$ for $m=i_1,\ldots ,i_L$.
\end{proof}

\begin{definition} {\em We say that a polynomial $P(z)\in\Q[z]$ is \emph{integer-valued} if $P(n)$ is an integer for every $n\in \Z$.}
\end{definition} 
\begin{theorem} Let $f(z)=\displaystyle\sum_{n\geq 0} a_n z^n\in \mathbb{Q}[[z]]$ be a power series that converges on the open unit disk and write $d_n=\den(a_n)$. Then one of the following three statements must hold:
\begin{enumerate}
\item $f(z)$ admits the unit circle as a natural boundary;
\item there is some $\beta>1$, depending only upon $f(z)$, such that for every set $S\subseteq\mathbb{N}$ of zero density, we have $\lcm\{d_i \colon i\le n, i\not\in S\} >\beta^n$ for all sufficiently large $n$;
\item for every positive constant $C$, there exist a positive integer $n$, an integer-valued polynomial $P(z)$ of degree at most $n$, and co-prime polynomials $u(z)$ and $v(z)$  of degree at most $n$ with $v(0)\neq 0$ such that 
$${\rm ord}\left(\sum_{j\ge 0} P(j) a_j z^j - u(z)/v(z)\right) > Cn.$$
\end{enumerate}
\label{thm:trichotomy}
\end{theorem}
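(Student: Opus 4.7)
The plan is to argue by contrapositive: assume (1) and (2) both fail, and derive (3) for an arbitrary $C > 0$. Since (1) fails, $f$ extends analytically across some arc of the unit circle, so there is a simply connected open set $G$ (e.g., the open unit disk together with a disk around a regular boundary point) on which $f$ is analytic and which strictly contains the open unit disk; by monotonicity of conformal radius, its conformal radius at $0$ satisfies $\rho > 1$, and we fix $r \in (1, \rho)$. This gives the constants $C_1, C_2$ of Corollary~\ref{cor:cor of Polya's inequality}. The failure of (2) gives, for every $\beta > 1$, a zero-density set $S_\beta \subseteq \mathbb{N}$ and infinitely many $N$ with
\[
D_N(S_\beta) := \lcm\{d_j : j \le N,\ j \notin S_\beta\} \le \beta^N.
\]

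Given $C > 0$, fix $\alpha \in (0, 1/(2C))$, then $\epsilon \in (0, \alpha/2)$ to be chosen sufficiently small, and $\beta > 1$ to be chosen sufficiently close to $1$. For $N$ large with $D_N(S_\beta) \le \beta^N$, we also have $L := |S_\beta \cap [1,N]| < \epsilon N$ since $S_\beta$ has zero density. Apply Siegel's lemma (Lemma~\ref{lem:Siegel}) to the elements of $S_\beta \cap [1,N]$ to obtain integers $c_0, \ldots, c_{2L}$, not all zero and of absolute value at most $N\binom{N}{2L}$, such that the integer-valued polynomial $P(z) := \sum_{i=0}^{2L} c_i \binom{z}{i}$ vanishes on $S_\beta \cap [1,N]$. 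Using $\sum_{j\ge 0} \binom{j}{i} a_j z^j = z^i f^{(i)}(z)/i!$, the auxiliary series is
\[
g(z) := \sum_{j \ge 0} P(j) a_j z^j \;=\; \sum_{i=0}^{2L} \frac{c_i}{i!}\, z^i f^{(i)}(z).
\]
Corollary~\ref{cor:cor of Polya's inequality} then yields $|\Delta_k(g)| \le (k+1)!\, C_1^{k+1} A^{k+1} r^{-k(k+1)}$ with $A := \sum_{i=0}^{2L} |c_i| C_2^i$, while the nonvanishing coefficients of $g$ have denominators dividing the corresponding $d_j$ (with $P(j)\in \mathbb{Z}$), giving $D_N(S_\beta)^{k+1}\Delta_k(g) \in \mathbb{Z}$, and hence $|\Delta_k(g)| \ge \beta^{-N(k+1)}$ whenever $\Delta_k(g)\ne 0$ and $2k \le N$.

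Set $m := \lfloor \alpha N \rfloor$ and $d := \lfloor N/2 \rfloor - m$. Using $\log \binom{N}{2L} \le 2L\log(eN/(2L))$ and $L \le \epsilon N$, one estimates $\log A \le O(\log N) + 2\epsilon N \log(eC_2/(2\epsilon))$, and the required strict inequality ``Pólya upper $<$ arithmetic lower'' reduces, after dividing by $k+1$, to
\[
k \log r \;>\; N \log \beta + \log A + O(\log k),
\]
which holds for every $k \in [m, m+d]$ and all sufficiently large $N$ once $\log \beta$ and $2\epsilon \log(eC_2/(2\epsilon))$ are each chosen to be small compared with $\alpha \log r$. Consequently $\Delta_k(g) = 0$ for $k = m, m+1, \ldots, m+d$, so Corollary~\ref{cor:H1} furnishes coprime $u, v \in \mathbb{Q}[z]$ with $v(0) \ne 0$, $\deg u \le m - 1$, $\deg v \le m$, and $g(z) - u(z)/v(z) = O(z^{m+d+1})$. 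Setting $n := m$, we obtain $\deg P \le 2L \le 2\epsilon N < n$ (for $N$ large), $\max(\deg u, \deg v) \le n$, and $\ord(g - u/v) \ge m + d + 1 \ge N/2 + 1 > C\alpha N \ge Cn$, establishing (3).

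The principal obstacle is a simultaneous balancing of three parameters: the degree constraint on $P$ forces $\epsilon \ll \alpha$, while the two-sided bound on $|\Delta_k(g)|$ forces both $\log \beta$ and the ``entropic term'' $2\epsilon \log(eC_2/(2\epsilon))$ in $\log A$ to be small multiples of $\alpha \log r$. All three constraints are simultaneously satisfiable because $\log r > 0$ is fixed the moment (1) is ruled out, and the negation of (2) supplies a $\beta$ arbitrarily close to $1$ together with an appropriate $S_\beta$; the remaining verification is bookkeeping with Stirling and entropy estimates.
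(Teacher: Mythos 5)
Your proof is correct and uses the same essential machinery as the paper (Siegel's lemma to build the auxiliary integer-valued polynomial $P$, Corollary~\ref{cor:cor of Polya's inequality} to bound Hankel determinants, the arithmetic lower bound coming from denominators, and Corollary~\ref{cor:H1} for the rational approximation). The logical packaging differs: the paper, after ruling out (1), argues a dichotomy directly (either for every $C$ some $S$ and $n$ make all of $\Delta_n(g_n),\ldots,\Delta_{Cn}(g_n)$ vanish, giving (3); or for some $C$ a nonzero determinant always appears, and the resulting lower bound on $d(m)$ yields (2) with $\beta\in(1,r^{1/C})$). You instead take the contrapositive, using the negation of (2) to supply a $\beta$ arbitrarily close to $1$ together with $S_\beta$ and large $N$, and then show all determinants in the window $[\lfloor\alpha N\rfloor,\lfloor N/2\rfloor]$ must vanish. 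These are interchangeable; what your route buys is a small but genuine clarification: for the integrality $D_N^{k+1}\Delta_k(g)\in\Z$ you explicitly require $2k\le N$, since $\Delta_k(g)$ involves coefficients of $g$ up to index $2k$. The paper's statement ``$d(m)^{m+1}\Delta_m(g_n)$ is an integer'' with $d(m)=\lcm\{d_i:i\le m,\ i\notin S\}$ is, as written, too optimistic (one needs $d(2m)$ to cover all entries, and even then $2m$ may exceed $Cn$ where $P_n$ is no longer known to vanish on $S$); this is fixable by a reindexing that slightly shrinks $\beta$, but your version avoids the issue cleanly by choosing the Hankel window to sit below $N/2$. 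Your parameter bookkeeping ($\alpha<1/(2C)$, $\epsilon<\alpha/2$ small enough that the entropy term is $<\tfrac12\alpha\log r$, then $\beta$ close enough to $1$) is sound and the order of quantifiers is respected.
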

\begin{proof} Suppose that $f(z)$ does not admit the unit circle as a natural boundary. Let $D$ be a simply connected region of the plane that properly contains the open unit disc such that $f(z)$ can be extended to an analytic function in 
$D$.  Let $r\in (1,\rho)$, where $\rho>1$ is the conformal radius (from the origin) of $D$. For the moment, let $C$ be a fixed positive integer.

Let $S$ be a subset of the natural numbers of zero density and for $m\ge 1$, let $S(m)$ denote the set of natural numbers $j\le m$ with $j\in S$.  Let $n$ be a natural number and let $i_1<\cdots <i_L\le Cn$ denote the elements of $S(Cn)$.  Then by assumption $L/n\to 0$ as $n\to \infty$ and we assume that $n$ is large enough that $2L<n$.

By Lemma \ref{lem:Siegel} there exist
$c_0,\ldots ,c_{2L}\in \mathbb{Z}$, not all zero, with $\vert c_i\vert< Cn {Cn\choose 2L}$ for $i=0,\ldots ,2L$ such that
$$\sum_{i=0}^{2L} c_i {m\choose i}=0$$ for $m=i_1,\ldots ,i_L$.  Then we let
$g_n(z) = \sum_{j=0}^{2L} c_j z^j f^{(j)}(z)/j!$.   We can express 
$$g_n(z) = \sum_{j\geq 0} P_n(j)a_j z^j$$ 
for the integer-valued polynomial $P_n(z):=\displaystyle\sum_{i=0}^{2L}c_i\binom{z}{i}$ of degree at most $2L< n$ and $P_n(i_1)=\cdots = P_n(i_L)=0$.  Then by Corollary~\ref{cor:cor of Polya's inequality}, there exist constants $A>0$ and $\alpha\ge 1$, depending only on the data $(f,D,r)$, such that for $m\in \{n,\ldots ,Cn\}$ we have
\begin{eqnarray*} \vert \Delta_m(g_n(z))\vert & \le & (m+1)! A^{m+1} \left( \sum_{j=0}^{2L} |c_j|  \alpha^{j}\right)^{m+1} r^{-m(m+1)} \\
& <&  (m+1)! A^{m+1} \left(Cn^2 {Cn\choose 2L} \alpha^{2L}\right)^{m+1} r^{-m(m+1)}.\end{eqnarray*}

Let $d(m)=\lcm\{d_i \colon i\le m, i\not\in S\} $.
By definition, $d(m)^{m+1} \Delta_m(g_n(z))$ is an integer for $m=n,\ldots ,Cn$ and hence we must have either $\Delta_m(g_n(z))=0$ for all $m$ in this range, or there is some $m\in \{n,\ldots ,Cn\}$ such that \begin{equation}\label{eq:1/d(m)^(m+1)}
1/d(m)^{m+1} \leq \vert \Delta_m(g_n(z))\vert <  (m+1)! A^{m+1} \left(Cn^2 {Cn\choose 2L} \alpha^{2L}\right)^{m+1} r^{-m(m+1)}.
\end{equation}
We fix $r_0\in (1,r)$ and then fix a small $\epsilon>0$ both of which will be specified later. We use Stirling's estimate $\log k!=k\log k-k+O(\log k)$. When $n$ is large, we have $2L<\epsilon n$ since $S$ has zero density and  then
\begin{align*}
\log\binom{Cn}{2L}&< Cn\log Cn-\epsilon n\log\epsilon n-(C-\epsilon)n\log(C-\epsilon)n+O(\log n)\\
&=(C\log C-\epsilon\log\epsilon-(C-\epsilon)\log(C-\epsilon))n+O(\log n).
\end{align*}
As $\epsilon\to 0$, we have $C\log C-\epsilon\log\epsilon-(C-\epsilon)\log(C-\epsilon)\to 0$. Therefore we may choose a sufficiently small $\epsilon$ such that for every sufficiently large $n$, we have:
\begin{equation}\label{eq:CA...<r_0^n}
    CA n^2 {Cn\choose 2L} \alpha^{2L} < r_0^n.
\end{equation}
Combining \eqref{eq:1/d(m)^(m+1)} and \eqref{eq:CA...<r_0^n}, we have  
$$1/d(m)^{m+1} < (m+1)!  (r_0/r)^{m(m+1)}$$ for all large $n$ with $m\in \{n,\ldots ,Cn\}$ such that $\Delta_m(g_n(z))\neq 0$. 

There are now two cases.  First, if for every positive integer $C$, there is a set $S\subset \N$ of zero density and a large integer  $n$ such that $\Delta_m(g_n(z))=0$ for all $m\in \{n\ldots ,Cn\}$ where $g_n$ is constructed as above.  In this case we see that (iii) holds by Corollary~\ref{cor:H1}.
Second, if there is some $C$ such that for every set $S\subset\N$ of zero density and for all large $n$ we have $\Delta_m(g_n(z))\neq 0$ for some $m\in \{n,\ldots ,Cn\}$ then we have that
$$1/d(m)^{m+1} < (m+1)!  (r_0/r)^{m(m+1)}.$$
But this implies:
 \begin{equation}\label{eq:r/r_0}
 d(Cn) \geq d(m) > (r/r_0)^m (m+1)!^{-1/(m+1)} >  \frac{(r/r_0)^m}{m+1}\geq \frac{(r/r_0)^n}{Cn+1}
\end{equation}
for all sufficiently large $n$. Now we may fix any $\beta\in (1,r^{1/C})$ at the beginning and choose $r_0\in (1,r)$ such that $(r/r_0)^{1/C}>\beta$, then \eqref{eq:r/r_0} gives
 $d(n)>\beta^n$ for all sufficiently large $n$. This means (ii) holds and we finish the proof.
\end{proof}

We have an elementary result relating a lower bound on
$\lcm\{\den(a_i):\ i\leq n\}$ to a lower bound on $\den(a_i)$:
\begin{lemma}\label{lem:kappa from beta}
Let $(d_n)_{n\geq 0}$ be a sequence of positive integers. Suppose there exists $\beta>1$ such that for every set $S\subset \N$ of zero density, we have:
$$\lcm\{d_i:\ i\leq n,\ i\notin S\}>\beta^n$$
for all sufficiently large $n$. Let $\kappa\in (0,\log \beta)$, then the set
$\{n:\ d_n\geq \kappa n\}$ has positive density.
\end{lemma}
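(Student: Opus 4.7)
The plan is a short contradiction argument based on the prime number theorem estimate for Chebyshev's function $\psi(m) = \log \lcm\{1, 2, \ldots, m\}$.

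First, I set $T := \{n \in \N : d_n \geq \kappa n\}$ and assume, for contradiction, that $T$ has zero density. Applying the hypothesis of the lemma with $S = T$ yields
\[
\lcm\{d_i : i \leq n,\ i \notin T\} > \beta^n
\]
for all sufficiently large $n$. On the other hand, for each $i \leq n$ with $i \notin T$, the definition of $T$ forces $d_i < \kappa i \leq \kappa n$, so that every such $d_i$ lies in $\{1, 2, \ldots, \lfloor \kappa n \rfloor\}$. Consequently the lcm on the left divides $\lcm\{1, 2, \ldots, \lfloor \kappa n \rfloor\}$.

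The prime number theorem, in its equivalent formulation $\psi(m) = m(1 + o(1))$ as $m \to \infty$, then gives
\[
\beta^n < \lcm\{1, 2, \ldots, \lfloor \kappa n \rfloor\} = \exp\bigl(\kappa n (1 + o(1))\bigr).
\]
Taking logarithms, dividing by $n$, and letting $n \to \infty$ produces $\log \beta \leq \kappa$, contradicting the choice $\kappa < \log \beta$. I do not anticipate a serious obstacle: the argument is immediate once PNT is invoked, and the compatibility of ``zero density'' between the lemma's hypothesis and the contradiction assumption is direct from the paper's definition, since a set has zero density precisely when its upper density vanishes. Note that the weaker Chebyshev bound $\psi(m) \leq Cm$ with $C$ an absolute constant strictly greater than $1$ would not suffice here because the gap $\log\beta - \kappa$ can be arbitrarily small, so the asymptotic sharpness $\psi(m)/m \to 1$ is genuinely used.
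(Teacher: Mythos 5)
Your proof is correct and is essentially identical to the paper's: both take the complement of $\{n : d_n \ge \kappa n\}$, assume zero density, feed it into the hypothesis, bound the lcm by $\lcm\{1,\ldots,\lfloor\kappa n\rfloor\}=e^{\kappa n+o(n)}$ via the prime number theorem, and derive a contradiction with $\kappa<\log\beta$. Your closing remark that the full asymptotic $\psi(m)\sim m$ is genuinely needed (rather than a Chebyshev-type bound) is a nice observation, but the argument itself is the same as the paper's.
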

\begin{proof}
Put $S=\{n:\ d_n\geq \kappa n\}$ and suppose that $S$ has zero density. Let $n$ be a large integer. The given assumption gives:
$$\lcm\{d_i:\ i\leq n,\ i\notin S\}>\beta^n.$$
On the other hand, the definition of $S$ gives:
$$\lcm\{d_i:\ i\leq n,\ i\notin S\} \leq \lcm\{m:\ 1\leq m\leq \kappa n\} = e^{\kappa n+o(n)},$$
where the second equality follows from the Prime Number Theorem \cite[Chapter~6]{MV06_MN}. Since $\kappa<\log\beta$, we have a contradiction when $n$ is large. Therefore $S$ must have positive density.
\end{proof}

\begin{proposition} Let $f(z)=\displaystyle\sum_{n\geq 0} a_n z^n\in \Qbar[[z]]$ with the property that there exists a nonzero
$P(z)\in\Qbar[z]$ such that $g(z)= \displaystyle\sum_{n\geq 0} P(n) a_n z^n$ is the power series expansion of a rational function.  Then at least one of the following holds: 
\begin{itemize}
	\item [(i)] $h(a_n)\gg n$ on a set of positive density.
	\item [(ii)] There exist a positive integer  $s$ and polynomials $Q_0,\ldots,Q_{s-1} \in \Qbar[z]$ such that
	$P(sn+i)a_{sn+i}=Q_i(n)$ for every $i\in\{0,\ldots,s-1\}$ and sufficiently large $n$.
\end{itemize} 
\label{prop:rat}
\end{proposition}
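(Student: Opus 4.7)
The plan is to exploit the constant-coefficient linear recurrence satisfied by $b_n := P(n)\,a_n$, which comes for free from the hypothesis $g(z) \in \Qbar(z)$. A partial fraction expansion of $g$ yields distinct nonzero $\alpha_1, \ldots, \alpha_k \in \Qbar$ (reciprocals of the finite poles of $g$) and nonzero polynomials $Q_1, \ldots, Q_k \in \Qbar[z]$ such that
\[
b_n \;=\; \sum_{j=1}^{k} Q_j(n)\,\alpha_j^n \qquad \text{for all sufficiently large } n.
\]
The dichotomy in the proposition corresponds to whether or not every $\alpha_j$ is a root of unity.

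If yes, pick $s \in \N$ with $\alpha_j^s = 1$ for all $j$. Then for each residue $i \in \{0, 1, \ldots, s-1\}$, substituting $n \mapsto sn+i$ shows that $b_{sn+i} = \sum_j \alpha_j^{\,i}\, Q_j(sn+i)$ is a polynomial in $n$ over $\Qbar$, which is conclusion~(ii). Otherwise some $\alpha_{j_0}$ is not a root of unity, and we aim at (i). Since $h(P(n)) = O(\log n)$, the height inequalities of \S\ref{sec:height} give $h(a_n) \ge h(b_n) - O(\log n)$ for all but finitely many $n$, so it suffices to prove $h(b_n) \gg n$ on a set of positive density.

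To produce this lower bound, I work at a single place. Let $L$ be a number field containing all the $\alpha_j$ and the coefficients of every $Q_j$. Since $h(\alpha_{j_0}) > 0$, there exists a place $v$ of $L$ with $|\alpha_{j_0}|_v > 1$. Re-index so that $|\alpha_1|_v = M := \max_j |\alpha_j|_v > 1$, and put $T := \{j : |\alpha_j|_v = M\}$. Factoring out $\alpha_1^n$,
\[
\alpha_1^{-n}\, b_n \;=\; \underbrace{\sum_{j \in T} (\alpha_j/\alpha_1)^n\, Q_j(n)}_{(\ast)} \;+\; \sum_{j \notin T} (\alpha_j/\alpha_1)^n\, Q_j(n),
\]
and the second sum decays exponentially to $0$ in $v$-norm. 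The sum $(\ast)$ is a nonzero exponential polynomial (the $\alpha_j/\alpha_1$ for $j \in T$ are distinct and the $Q_j$ are nonzero) whose characteristic roots all have $v$-norm $1$. Granted a positive-density set $S \subseteq \N$ on which $|(\ast)|_v$ is bounded below by a positive constant, one has $|b_n|_v \ge cM^n$ for $n \in S$, and the place-wise inequality $h(b_n) \ge (d(v)/[L:\Q])\,\log^+ |b_n|_v$ delivers $h(b_n) \gg n$ on $S$.

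The main obstacle is producing this $v$-adic lower bound for $(\ast)$. The subcase $|T|=1$ is immediate, since then $(\ast) = Q_1(n)$ vanishes only at its finitely many integer roots. In general, the plan is to split $\N$ into arithmetic progressions modulo a suitable $s'$ so that the ratios $\alpha_j/\alpha_1$ ($j \in T$) governing $(\ast)$ on each progression are pairwise nondegenerate (no ratio a root of unity), and then invoke an Evertse-style quantitative lower bound for nondegenerate linear recurrences over number fields (obtained via the subspace theorem) to conclude that $|(\ast)|_v \ge M^{-\varepsilon n}$ outside a zero-density exceptional set of $n$, for any fixed $\varepsilon > 0$. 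This absorbs the polynomial factors and converts the dominant $M^n$ growth into the required bound $h(b_n) \gg n$ on a density-one (a fortiori positive-density) subset, completing the reduction to conclusion~(i).
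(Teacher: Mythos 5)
Your argument is correct and follows essentially the same route as the paper: express $P(n)a_n$ as an exponential polynomial $\sum_j Q_j(n)\alpha_j^n$, show that failure of~(i) forces every $\alpha_j$ to be a root of unity by choosing a place where some $|\alpha_j|_v>1$ and invoking a subspace-theorem-based lower bound for the nondegenerate (after passing to a suitable arithmetic progression) exponential sum, then read off~(ii). The only difference is the specific diophantine ingredient — you appeal to Evertse's lower bound for nondegenerate linear recurrences, while the paper cites \cite[Proposition~2.3]{KMN19_AP}; both are consequences of the subspace theorem and give what is needed.
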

\begin{proof}
Suppose that (i) does not hold. We may assume that $g(z)$ is not a polynomial since otherwise $a_n=0$ for all sufficiently large $n$ and (ii) holds trivially. Write $g(z)=u(z)/v(z)$ for coprime polynomials $u,v\in\Qbar[z]$ with
$v(0)=1$ and $v$ is non-constant.
Write $v(z)=(1-\alpha_1 z)^{m_1}\cdots (1-\alpha_q z)^{m_q}$ with $q\ge 1$ and $\alpha_1,\ldots ,\alpha_q$ are distinct nonzero algebraic numbers. For all sufficiently large $n$, we can express
$$P(n)a_n=R_1(n)\alpha_1^n+\ldots+R_q(n)\alpha_q^n$$
where the $R_i$'s are nonzero elements of $\Qbar[z]$ \cite[Theorem~4.1.1]{Sta11_EC1}. First, we prove that all the $\alpha_i$'s are roots of unity. Restricting $n$ to appropriate arithmetic progressions if necessary, we may assume that $\alpha_i/\alpha_j$ is not a root of unity for $i\neq j$. Suppose there is a place $w$ of $\Qbar$ such that 
$$M:=\displaystyle\max_{1\leq i\leq q}\vert \alpha_i\vert_w>1.$$ Let $M_0\in (1,M)$ and $\mathcal{E}=\{n:\ \vert a_n\vert_w\leq M_0^n\}$. First \cite[Proposition~2.3]{KMN19_AP} gives that for $n\in \mathcal{E}$, 
the $R_i(n)\alpha_i^n$'s satisfy one of finitely many non-trivial linear dependence relations. Then \cite[Proposition~2.3]{KMN19_AP}
yields that there are only finitely many $n$ for each linear relation. Overall, we have that $\mathcal{E}$ is finite or in other words
$\vert a_n\vert_w > M_0^n$ for all but finitely many $n$. This gives (i), contradicting the assumption at the beginning of the proof. Hence we have $\vert \alpha_i\vert_w\leq 1$ for every place $w$ and for $0\leq i\leq q$. Therefore all the $\alpha_i$'s are roots of unity.
Let $s\in\N$ such that $\alpha_i^s=1$ for every $i$. Then we have polynomials $Q_i(z)$'s for $i\in \{0,\ldots ,s-1\}$ satisfying (ii).

\end{proof}

\begin{theorem}\label{thm: irrational D-finite and lcm d_i} 
Let $f(z)=\sum a_n z^n \in \mathbb{Q}[[z]]$ be a D-finite power series that converges on the open unit disk such that for every nonzero $P(z)\in\Q[z]$ the series
$\displaystyle\sum_{n} P(n)a_nz^n$ is not a rational function. Put $d_n=\den(a_n)$ for every $n$. There exists $\beta>1$ such that for every set $S\subset\N$ of zero density, we have
$$\lcm\{d_i:\ i\leq n,i\notin S\}>\beta^n$$
for all sufficiently large $n$.
\end{theorem}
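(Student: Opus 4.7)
The plan is to deduce the result from the trichotomy established in Theorem~\ref{thm:trichotomy} by showing that alternatives (1) and (3) there cannot hold under our hypotheses, leaving only (2), which is precisely the conclusion sought.

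Alternative (1) is ruled out because $f$ is D-finite: its complex analytic singularities are contained in the finite zero set of the leading coefficient $A_0(z)$ of the annihilating operator $\scrL$, and since $f$ converges on the open unit disk, any such singularities on the closed disk lie on the unit circle and are finite in number. One then takes a disk of radius $1+\epsilon$, excises radial slits extending outward from each of these finitely many singular points, and obtains a simply connected region properly containing the open unit disk to which $f$ extends analytically; hence the unit circle is not a natural boundary.

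To rule out alternative (3), suppose it holds. For each $C>0$ it provides a positive integer $n$, an integer-valued polynomial $P(z)$ of degree $\le n$, and coprime polynomials $u(z),v(z)$ of degree $\le n$ with $v(0)\neq 0$ satisfying
$$\ord\!\left(g(z) - \frac{u(z)}{v(z)}\right) > Cn, \qquad g(z) := \sum_{j\ge 0}P(j)a_jz^j.$$
Writing $P$ in the falling-factorial basis exhibits $g = \sum_{k=0}^n (c_k/k!)\,z^k f^{(k)}(z)$, so $g$ lies in the D-module $\mathcal{M}$ spanned by $f,f',\ldots,f^{(p-1)}$ over $\Q(z)$. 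Lemma~\ref{lem:matrix B} then furnishes a differential operator $\scrL_g$ of order $\le p$ annihilating $g$ whose polynomial coefficients have degree $O(n)$, with implicit constant depending only on $\scrL$. For $C$ larger than a universal constant depending only on $\scrL$, the hypothesis of Lemma~\ref{lem:f=P/Q} (applied to $g$ with operator $\scrL_g$ and $\ell=n$) is satisfied, yielding $\scrL_g(u/v)=0$; hence $g - u/v$ is a formal power series solution of $\scrL_g$ vanishing to order $>Cn$. The second part of Lemma~\ref{lem:f=P/Q} constrains the order of any nonzero such solution to lie in a set of size $\le p$ consisting of the roots of a polynomial in $m$ of degree $\le p$ built from the lowest-order data of $\scrL_g$ at $0$; these roots coincide with a subset of the exponents at $0$ of the fixed D-module $\mathcal{M}$. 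Since $f$ is holomorphic at the origin, $\mathcal{M}$ is at worst regular singular there and its set of exponents is a finite set bounded independently of $n$. Taking $C$ beyond this bound forces $g = u/v$, so $\sum P(j)a_jz^j$ is a rational function for some nonzero $P$, contradicting our hypothesis on $f$.

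The genuine difficulty is the uniformity in $n$. The linearly growing degrees of $\scrL_g$ are absorbed into the universal constant for $C$ in the first conclusion of Lemma~\ref{lem:f=P/Q}. The more delicate uniformity is that of the exceptional set arising in the second conclusion, which must stay bounded as $n$ and $\scrL_g$ vary; this rests on identifying the set with the exponents of the ambient fixed module $\mathcal{M}$ at $0$, an invariant depending only on $\mathcal{M}$ and not on the particular cyclic element $g$ chosen from it.
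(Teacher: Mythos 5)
Your overall strategy parallels the paper's at the top level — invoke the trichotomy of Theorem~\ref{thm:trichotomy}, dismiss (1) because a D-finite function has finitely many singularities, and then rule out (3), leaving (2). The treatment of (1) is fine. But the way you try to rule out (3) diverges from the paper's in a way that introduces a genuine gap.

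The paper uses Lemma~\ref{lem:matrix B} to \emph{transfer} the rational approximation of $g$ back to $f$: since $f$ is a $\Q(z)$-linear combination of $g,g',\ldots ,g^{(p-1)}$ with coefficients of degree $O(n)$, the approximant $u/v$ of $g$ yields a rational function $\Theta$ of degree $O(n)$ with $\ord(f-\Theta)>Cn-O(n)$. Lemma~\ref{lem:f=P/Q} is then applied to $f$ with the \emph{fixed} operator $\scrL$, so the exceptional set $T$ is a fixed finite set determined once and for all by $\scrL$; taking $C$ larger than an explicit constant of $p,\delta,\max T$ kills the exception. You instead apply Lemma~\ref{lem:f=P/Q} directly to $g$ with an annihilating operator $\scrL_g$ that varies with $n$, and you handle the exceptional set $T_g$ by asserting that its elements "coincide with a subset of the exponents at $0$ of the fixed D-module $\mathcal{M}$." That assertion is false: the indicial roots at $0$ of the annihilator of a cyclic vector are \emph{not} an invariant of the module, because changing the cyclic vector creates apparent singularities. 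Already for the trivial module $\Q(z)$ (generated by $f=1$, annihilated by $D$, exponent $0$) the cyclic vector $g=z$ is annihilated by $zD-1$, whose indicial root at $0$ is $1$. In your situation $g=\Lambda f$ with $\Lambda=\sum_k (c_k/k!)z^kD^k$, so a solution $h$ of $\scrL_f$ of exponent $\rho_0$ is sent to one of exponent $\rho_0+j$ for the smallest $j$ with $P(\rho_0+j)\neq 0$ and a nonvanishing coefficient of $h$; since $\deg P$ can be as large as $n$, that shift can be comparable to $n$, so $\max T_g$ is \emph{not} bounded independently of $n$, and the argument "take $C$ beyond this bound" does not go through as written. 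One would need a separate, quantitative estimate such as $\max T_g = O(n)$ with a controlled constant, together with a comparison to the lower bound $Cn$; you supply neither.

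Two smaller issues. First, you invoke Lemma~\ref{lem:matrix B}, whose hypothesis requires $\dim_{\C(z)}\mathrm{Span}\{g,g',\ldots\}=p$. The paper first replaces $f$ by the series $t(z)=\sum P(n)a_n z^n$ that minimises this dimension precisely so that this hypothesis holds for every subsequent choice of integer-valued $P$; you omit this normalisation, so you cannot assume the hypothesis. Second, Lemma~\ref{lem:matrix B} produces the matrix expressing $f,\ldots,f^{(p-1)}$ in terms of $g,\ldots,g^{(p-1)}$; it does not directly hand you an operator $\scrL_g$ of order $\le p$ with coefficient degrees $O(n)$. That can be extracted with some further linear algebra, but it is not what the lemma states and you should not cite it for that. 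In short: your route is genuinely different (apply the rational-approximation principle to $g$ rather than transferring to $f$), but the key uniformity claim that makes the exceptional set innocuous is incorrect; the paper's transfer step via Lemma~\ref{lem:matrix B} is exactly what avoids this problem by keeping the operator in Lemma~\ref{lem:f=P/Q}, and hence its exceptional set, fixed.
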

\begin{proof}
Among all the nonzero integer-valued polynomials $P(z)$, we pick one such that
$${\rm dim}_{\mathbb{C}(z)}{\rm Span}\{ t(z),t'(z),t''(z),\ldots \}\ \text{is minimal,}$$ where 
$t(z)=
\sum_{n\ge 0} a_n P(n)z^n$.   Then it suffices to prove the result for $t(z)$ since $\den(P(n)a_n)$ divides $\den(a_n)$ for every $n$.  Thus for the remainder of the proof, we replace $f(z)$ by $t(z)$ and let
$$p= {\rm dim}_{\mathbb{C}(z)}{\rm Span}\{ f(z),f'(z),f''(z),\ldots \}.$$  
We have $p>1$ since $f(z)\notin\Q(z)$ thanks to the assumption in the statement of the theorem. Then by the above construction, $f(z)=\sum a_n z^n$ satisfies that 
for every nonzero integer-valued $P(z)$ we have
$p={\rm dim}_{\mathbb{C}(z)}{\rm Span}\{F(z),F'(z),F''(z),\ldots \}$, where $F(z)=\sum_{n\geq 0} a_n P(n) z^n$.
Let $$\scrL=\displaystyle\sum_{i=0}^p A_iD^{p-i}$$ be the differential operator as in \eqref{eq:operator L} such that $\scrL(f)=0$ and let $$\delta=\displaystyle\max_{0\leq i\leq p}\deg(A_i),$$ as before. 
Since $f(z)$ is D-finite it does not admit the unit circle as a natural boundary. We assume that the conclusion of this theorem does not hold and arrive at a contradiction.

By Theorem \ref{thm:trichotomy}, we have that for every positive integer $C$, there is a natural number $n$, a nonzero integer-valued polynomial $P(z)$ of degree at most $n$ such that
$$g(z)=\sum a_n P(n) z^n$$ 
has the property that there there exist co-prime polynomials $u(z),v(z)$ of degree at most $n$ with $v(0)\neq 0$ such that 
\begin{equation}\label{eq:ord vg-u}
{\rm ord}(g(z)-u(z)/v(z))>Cn.
\end{equation}

By Lemma~\ref{lem:matrix B}, there are polynomial $r_0(z),\ldots ,r_{p-1}(z), s(z)$ of degree at most 
$(2\delta+1)np$, with $s(z)$ nonzero, such that 
$$s(z)f(z) = \sum_{i=0}^{p-1} r_i(z) g^{(i)}(z).$$
Now ${\rm ord}(g^{(i)}(z) - (u(z)/v(z))^{(i)}) > Cn-i$ by \eqref{eq:ord vg-u} and so 
$${\rm ord}\left(s(z)f(z) - \sum_{i=0}^{p-1} r_i(z) (u/v)^{(i)}(z)\right)>Cn-p.$$  
This gives
${\rm ord}(f(z)-\Theta(z)) > Cn-p-(2\delta+1)pn$, where 
$$\Theta(z)= s(z)^{-1} \sum_{i=0}^{p-1} r_i(z) (u/v)^{(i)}(z).$$ 
Notice that $\Theta(z)$ is a rational function whose numerator and denominator have degree bounded by $(2\delta+1)np + np$.  Taking $C$ large enough, we see that $f(z)$ is necessarily rational by Lemma~\ref{lem:f=P/Q}, contradiction.
\end{proof}

\begin{lemma}
\label{lem:fabry} Let $S\subset\N$ with zero density. If $f(z)$ has radius of convergence $r\in (0,\infty)$ and if $f(z) = \sum_{n\in S} a_n z^n +g(z)$ where $g(z)$ has radius of convergence strictly greater than $r$, then $f(z)$ admits the circle of radius $r$ as a natural boundary.
\end{lemma}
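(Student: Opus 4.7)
The plan is to reduce the lemma to the classical Fabry gap theorem, which states: if $h(z)=\sum_{k\ge 1} b_k z^{n_k}$ is a nonzero power series with $b_k\neq 0$, $n_1<n_2<\cdots$, $n_k/k\to\infty$, and finite positive radius of convergence $\rho$, then the circle $|z|=\rho$ is a natural boundary of $h$. After rescaling $z\mapsto rz$, we may assume $r=1$, which simplifies the bookkeeping.

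First I would set $h(z):=\sum_{n\in S} a_n z^n$ and observe that $h=f-g$. Since $g$ has radius of convergence strictly greater than $1$ while $f$ has radius of convergence equal to $1$, the difference $h=f-g$ must have radius of convergence exactly $1$ (it cannot be larger, otherwise $f=h+g$ would have radius of convergence $>1$, nor can it be smaller, since then $f$ would as well). Next I would translate the zero-density hypothesis on $S$ into the hypothesis of Fabry's gap theorem: enumerating $S$ as $n_1<n_2<\cdots$, the bound $k\le |S\cap[1,n_k]|$ gives $k/n_k\le |S\cap[1,n_k]|/n_k$, and the right-hand side tends to $0$ by the zero-density assumption, so $n_k/k\to\infty$.

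With these two observations in place, Fabry's gap theorem, applied to $h$, yields that $h$ admits the unit circle as a natural boundary. To finish, I would argue by contradiction: if $f$ extended analytically across some point $z_0$ with $|z_0|=1$, then since $g$ is holomorphic in an open neighborhood of $\overline{D(0,1)}$ (in particular across $z_0$), the identity $h=f-g$ would provide an analytic continuation of $h$ across $z_0$, contradicting the fact that $|z|=1$ is a natural boundary for $h$.

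The only substantive ingredient is Fabry's gap theorem itself, which is a classical result (Fabry, 1896; see, e.g., Dienes, \emph{The Taylor Series}); the rest of the argument is bookkeeping. So the ``main obstacle'' is really just citing the correct form of Fabry's theorem — there is no new analytic work to do beyond the elementary verifications above.
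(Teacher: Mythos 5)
Your proof is correct and follows essentially the same route as the paper's: decompose $f$ as $(\sum_{n\in S} a_n z^n) + g$, note that the gap series has radius of convergence exactly $r$, apply Fabry's gap theorem to it, and conclude since $g$ is analytic on a larger disc. The only detail worth flagging is that to apply Fabry one should enumerate $\{n\in S : a_n\neq 0\}$ rather than $S$ itself, but this set is a fortiori of zero density, so the argument goes through unchanged.
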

\begin{proof} 
Let $f_0(z)=\sum_{n\in S} a_n z^n$.  Then $f_0(z)$ has radius of convergence $r$ and since $S$ has zero density, by Fabry's gap theorem (see \cite{Tur47_Ot}), $f_0(z)$ admits the circle of radius $r$ as its natural boundary.  Thus $f(z)$ does too, since $g(z)$ is analytic in a circle of radius $r+\epsilon$ centred at zero, for some $\epsilon>0$.
\end{proof}

\begin{proposition}
Let $K$ be a number field, let $f(z)=\sum a_n z^n \in K[[z]]$, and let $r$ be the radius of convergence of $f(z)$. 
Then the following hold:
\begin{itemize}
\item [(a)] if $r\in (1,\infty)$ and $f(z)$ does not admit the circle of radius $r$ as a natural boundary then there is a positive constant $\kappa$ such that $h(a_n)\ge \kappa n$ for $n$ in a set of positive density; 
\item [(b)] if $r\in (0,1)$ and and $f(z)$ does not admit the circle of radius $r$ as a natural boundary then there is a positive constant $\kappa'$ such that $h(a_n)\ge \kappa' n$ for $n$ in a set of positive density; 
\item [(c)] if $r\in \{0,\infty\}$, $f(z)$ is D-finite and $f(z)$ is not a polynomial then $h(a_n)=O(n\log n)$ and there is a positive constant $\kappa$ such that $h(a_n)>\kappa n\log n$ on a set of positive density.
\end{itemize}
\label{prop:when r neq 1}
\end{proposition}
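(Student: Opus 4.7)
Part (b) can be deduced from Fabry's gap theorem. Suppose for contradiction that $S_\kappa := \{n : h(a_n) \geq \kappa n\}$ has zero upper density for every $\kappa > 0$; fix $\kappa < \log(1/r)/[K:\Q]$ so that $e^{[K:\Q]\kappa} < 1/r$, and pick an embedding $\sigma : K \hookrightarrow \mathbb{C}$ at which $f$ has radius $r$. For $n \notin S_\kappa$ we have $|\sigma(a_n)| \leq H(a_n)^{[K:\Q]} \leq e^{[K:\Q]\kappa n}$, so the partial series $\sum_{n \notin S_\kappa} \sigma(a_n) z^n$ has radius of convergence at least $e^{-[K:\Q]\kappa} > r$, while $\sum_{n \in S_\kappa} \sigma(a_n) z^n$ is supported on a zero-density set and has radius equal to $r$. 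Lemma~\ref{lem:fabry} forces $\sigma(f)$ to admit the circle of radius $r$ as its natural boundary, a contradiction.

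For part (c), the upper bound $h(a_n) = O(n\log n)$ follows from iterating the D-finite polynomial recurrence $\sum_{i=0}^{k} p_i(n) a_{n-i} = 0$: solving for $a_n$ expresses it as a sum of products of $O(n)$ rational factors in $n$ of bounded degrees and heights, each contributing height $O(\log n)$. For the lower bound when $r = \infty$, $f$ is a D-finite entire function and so has finite order of growth (a classical fact); Cauchy's inequalities then give $|\sigma(a_n)| \leq \exp(-\kappa n \log n)$ for some $\kappa > 0$ at an embedding $\sigma$, and the Liouville bound $h(a_n) \geq (-\log|\sigma(a_n)|)/[K:\Q]$ (valid whenever $a_n \neq 0$) produces the desired lower bound on the positive-density set $\{n : a_n \neq 0\}$. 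When $r = 0$, the recurrence forces $|\sigma(a_n)| \geq \exp(\kappa n \log n)$, which contributes directly to $h(a_n)$ via the archimedean place.

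Part (a) is the most delicate case and adapts the Hankel--P\'olya strategy underlying Theorems~\ref{thm:trichotomy} and~\ref{thm: irrational D-finite and lcm d_i}, replacing $\Q$-denominators with heights over $K$. Suppose the conclusion fails: $T_\kappa := \{n : h(a_n) \geq \kappa n\}$ has zero upper density for every $\kappa > 0$. Fix small $\kappa > 0$ and large $C > 0$ (to be chosen), let $n$ be large, and put $L := |T_\kappa \cap [0, 2Cn]| = o(n)$. Lemma~\ref{lem:Siegel} yields an integer-valued polynomial $P(z) = \sum_{j=0}^{2L} c_j \binom{z}{j}$ vanishing on $T_\kappa \cap [0,2Cn]$ with $\log|c_j| = o(n)$, and we set $g_n(z) := \sum_j P(j) a_j z^j$, which lies in the $K[z]$-span of $\{f^{(i)} : 0 \leq i \leq 2L\}$. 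Fix a simply connected domain $G$ of conformal radius $\rho > r$ to which $f$ extends analytically at the distinguished embedding (available because $f$ does not admit $|z|=r$ as a natural boundary), and $s \in (r,\rho)$. Corollary~\ref{cor:cor of Polya's inequality} gives
\[
\log|\Delta_m(g_n)|_\sigma \leq -m(m+1)\log s + o(mn).
\]
A Hadamard-type estimate at every place of $K$ simultaneously produces the global upper bound $h(\Delta_m(g_n)) \leq (m+1)\max_{k\leq 2m} h(P(k)a_k) + O(m\log m) \leq 2\kappa Cn(m+1) + o(nm)$, using $P(k)a_k = 0$ for $k \in T_\kappa$ and $h(P(k)a_k) \leq o(n) + \kappa k$ otherwise. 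When $\Delta_m(g_n) \neq 0$ the Liouville inequality $\log|\Delta_m(g_n)|_\sigma \geq -h(\Delta_m(g_n))$ combined with these two bounds yields $m(m+1)\log s \leq 2\kappa Cn(m+1) + o(nm)$, a contradiction for $m \sim n$ once $\kappa < \log s/(4C)$. Hence for every large $n$ we must have $\Delta_m(g_n) = 0$ for all $m \in [n,Cn]$; Corollary~\ref{cor:H1} then produces $u_n/v_n \in K(z)$ of degree $\leq n$ with $v_n(0)\neq 0$ and $g_n - u_n/v_n = O(z^{Cn})$. A complex-analytic argument (for $C$ chosen sufficiently large) forces $g_n = u_n/v_n$ on $G$; then Proposition~\ref{prop:rat} applied to $P$ and $f$ yields either its case (i) (which is our desired conclusion $h(a_n) \gg n$ on positive density) or its case (ii), forcing $a_n$ to grow polynomially in $n$ and hence $r \in \{1,\infty\}$, contradicting $r \in (1,\infty)$.

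The main obstacle lies in the final complex-analytic step in part (a): deducing the equality $g_n = u_n/v_n$ from the approximation $g_n - u_n/v_n = O(z^{Cn})$, since the absence of a D-finite assumption on $f$ precludes a direct application of Lemma~\ref{lem:f=P/Q}. The plan is to exploit that the analytic extension of $g_n$ is uniformly bounded on a compact subdomain of $G$ while $u_n/v_n$ is rational of degree $\leq n$, and use Schwarz-type estimates to rule out a nonzero difference vanishing to order $Cn$ at the origin once $C$ is sufficiently large relative to $\log\rho/\log s$. Making this argument uniform in $n$ is the technical heart of the proof.
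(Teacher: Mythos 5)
The central problem with your proposal is part (a), which is in fact the simplest case and the one you have made the most complicated. You have brought the full Hankel--P\'olya--Siegel machinery of Theorem~\ref{thm:trichotomy} to bear on a statement that makes no D-finite hypothesis, and you yourself acknowledge being stuck at the final step: deducing $g_n = u_n/v_n$ from $\mathrm{ord}(g_n - u_n/v_n) > Cn$. Without a D-finite hypothesis on $f$ there is no analogue of Lemma~\ref{lem:f=P/Q} to force this equality, and the Schwarz-type estimate you sketch cannot do it either, since an analytic function on $G$ vanishing to order $Cn$ at the origin and bounded on a compact subdomain need not vanish identically. None of this is needed. The key observation you missed is that when $r\in(1,\infty)$ the coefficients \emph{decay}: $|\sigma(a_n)| < c^n$ for some $c<1$ and all large $n$, so whenever $a_n\neq 0$ one has $|\sigma(a_n)|^{-1} > c^{-n}$, whence $h(a_n) = h(a_n^{-1}) \geq \kappa n$ by the Liouville bound applied to $a_n^{-1}$. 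It only remains to note that $\{n : a_n\neq 0\}$ has positive density, which follows from Lemma~\ref{lem:fabry} since $f$ has no natural boundary. That is the paper's entire argument for (a); it is the mirror image of your (correct) argument for (b), not a substantially harder case.

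Your part (b) is essentially the paper's argument, phrased via $S_\kappa = \{n : h(a_n)\ge\kappa n\}$ instead of $\{n : |a_n| > b^n\}$; both work.

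Your part (c) has a gap in the lower bound. For $r=\infty$ you invoke Cauchy's inequalities to get $|\sigma(a_n)| \le \exp(-\kappa n\log n)$ and then assert that $\{n : a_n\neq 0\}$ has positive density, but this assertion is unjustified: nothing in your argument rules out a D-finite entire function with lacunary support. For $r=0$ you assert $|\sigma(a_n)| \ge \exp(\kappa n\log n)$, but zero radius of convergence only gives a $\limsup$ statement, hence a priori only an infinite (possibly sparse) set of such $n$; the recurrence by itself gives an upper bound on growth, not a lower bound of this form on most $n$. The paper sidesteps both issues at once by normalizing to a finite nonzero radius via the precise Gevrey order: choose rational $s=a/b\neq 0$ so that $g(z) = \sum a_n^b\, n!^{a}\, z^n$ is D-finite with finite nonzero radius of convergence, then apply Lemma~\ref{lem:fabry} exactly as in (a)/(b) to $g$ to produce constants $0<r_0<r_1$ and a positive-density set of $n$ on which $r_0^n < |a_n^b\, n!^{a}| < r_1^n$. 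On that set the factorial factor simultaneously delivers $a_n\neq 0$ and the $n\log n$ lower bound for $h(a_n)$, for either sign of $a$. You should incorporate this normalization step to close the gap.
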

\begin{proof} If $r\in (1,\infty)$, then there is some $c<1$ such that $\vert a_n\vert < c^n$ when $n$ is sufficiently large. Therefore when $a_n\neq 0$, we have $\vert a_n^{-1}\vert>(1/c)^n$ is exponentially large.  Hence there is a positive constant $\kappa$ such that for $n$ sufficiently large we have $h(a_n)=h(a_n^{-1})\ge \kappa n$ whenever $a_n\neq 0$.  Since $f(z)$ does not admit the circle of radius $r$ as a natural boundary, by Lemma~\ref{lem:fabry} we have that the set of $n$ for which $a_n\neq 0$ has positive density.

 If $r\in (0,1)$, there is some $c>1$ such that $\vert a_n\vert>c^n$ for infinitely many $n$.  Pick $b\in (1,c)$ with $1/b>r$ and let $S$ denote the set of $n$ for which $|a_n|>b^n$.  We claim that $S$ has positive density.  To see this, suppose that $S$ has density zero.  Then we can write $f(z) = f_0(z)+f_1(z)$, where 
$f_0(z)=\sum_{n\in S} a_n z^n$ and $f_1(z)=\sum_{n\not\in S} a_n z^n$. Then by construction $f_1$ is convergent in the disc $D(0,1/b)$.  Then $f_0(z)$ must have radius of convergence $r$ since $f(z)$ has radius of convergence $r$ and $f_1$ has radius of convergence strictly greater than $r$.  Since $S$ has zero density, by Lemma~\ref{lem:fabry} we have that $f_0(z)$ admits the circle of radius $r$ as its natural boundary and so $f(z)$ must too since $f_1(z)$ is analytic in $B(0,1/b)$ and $1/b>r$.  This is a contradiction.  Thus we obtain (b). 

It remains to consider the case $r\in \{0,\infty\}$. The property $h(a_n)=O(n\log n)$ has been proved in 
\cite[Theorem~1.6]{BNZ20_DF}.
The power series $f(z)$ is a Gevrey series whose precise Gevrey order is a rational number: there is $s\in\Q$ such that 
$ \sum a_n (n!)^s z^n$ has positive finite radius of convergence (see, for example, \cite{Ram78_DG}). Since $r\in\{\infty,0\}$, we have $s\neq 0$.  We write $s=a/b$ with $a,b$ nonzero integers and $b>0$.  Then by construction $g(z):=\sum_{n\ge 0} a_n^b n!^a z^n$ is D-finite and has positive nonzero radius of convergence.  In particular, by using Lemma~\ref{lem:fabry} as above there exist $r_0,r_1\in (0,\infty)$ with $r_0<r_1$ such that 
 $$r_0^n < a_n^b n!^a < r_1^n$$ for $n$ in a set of natural numbers of positive density.  In particular, if $a>0$ then we see that
 $a_n^{-b}> n!^a/r_1^n$, which gives that $h(a_n)> \kappa n\log n$ for some positive constant $\kappa$ for $n$ in a set of positive density.  Similarly, if $a<0$ then $a_n^b > r_0^n n!^{-a}$ and so the result follows similarly in this case.
\end{proof}

\begin{theorem}\label{thm:slightly stronger}
	Let $f(z)=\sum_{n=0}^{\infty} a_nz^n\in \Qbar[[z]]$ be a D-finite power series and let $r$ be the radius of convergence of $f$. We have:
	\begin{itemize}
		\item [(a)] If $r\in\{0,\infty\}$ and $f$ is not a polynomial then $h(a_n)=O(n\log n)$ for every large $n$ and $h(a_n)\gg n\log n$ on a set of positive density.
		\item [(b)] If $r\notin\{0,\infty\}$ then at least one of the following holds:
		\begin{itemize}		
		\item [(i)] $h(a_n)\gg n$ on a set of positive density.
		
		\item [(ii)] There is a constant $\beta>1$ such that for every set $S\subset \N$ of zero density, we have:
		$$\lcm\{\den(a_i):\ i\leq n,\ i\notin S\}>\beta^n$$
		for all sufficiently large $n$.
					
		\item [(iii)] There exist $s\in\N$ and 
		rational functions $Q_0(z),\ldots,Q_{s-1}(z)\in \Qbar(z)$ such that
		$a_{sn+i}=Q_i(n)$ for every $i\in\{0,\ldots,s-1\}$ and sufficiently large $n$.
		\end{itemize}	
	\end{itemize}
\end{theorem}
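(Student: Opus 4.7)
I will treat the two parts of the theorem in turn. When $r\in\{0,\infty\}$ and $f$ is not a polynomial, part (a) is immediate from Proposition~\ref{prop:when r neq 1}(c), which supplies both $h(a_n)=O(n\log n)$ and the positive-density lower bound $h(a_n)\gg n\log n$. So assume throughout the remainder that $r\in(0,\infty)$, as in part (b). Since $f$ is D-finite it is meromorphic on all of $\C$ and hence admits no circle as a natural boundary; if $r\neq 1$, Proposition~\ref{prop:when r neq 1}(a)--(b) immediately produce conclusion (i). The essential case is therefore $r=1$.

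Assume (i) fails. Either some nonzero $P(z)\in\Qbar[z]$ makes the associated series $\sum_n P(n)a_n z^n$ rational, or none does. In the first case, combining this with the failure of (i), Proposition~\ref{prop:rat} forces its option (ii): there are $s\in\N$ and $Q_i\in\Qbar[z]$ with $P(sn+i)a_{sn+i}=Q_i(n)$ for each $i\in\{0,\ldots,s-1\}$ and all large $n$. Dividing by $P(sn+i)$, which is nonzero for all but finitely many $n$, gives $a_{sn+i}=Q_i(n)/P(sn+i)\in\Qbar(n)$, which is conclusion (iii).

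In the remaining case, no polynomial multiple of $f$ is rational, and I aim to deduce (ii). This requires a version of Theorem~\ref{thm: irrational D-finite and lcm d_i} valid for $f\in K[[z]]$ rather than only for $f\in\Q[[z]]$. The outer structure is unchanged: choose a nonzero integer-valued $P(z)$ minimizing $p=\dim_{\C(z)}\mathrm{Span}\{t,t',t'',\ldots\}$ with $t(z)=\sum P(n)a_n z^n$, and replace $f$ by $t$ (which does not increase $\den(a_n)$). Since no polynomial multiple of $f$ is rational we have $p>1$. Apply an appropriate $K$-version of the trichotomy (Theorem~\ref{thm:trichotomy}): its third alternative, via Lemma~\ref{lem:matrix B} and Lemma~\ref{lem:f=P/Q}, would force $f$ to be rational, contradicting $p>1$. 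Hence the second alternative of the trichotomy must hold, delivering $\beta>1$ and the required growth of $\lcm\{\den(a_i):i\leq n,\,i\notin S\}$ for every zero-density $S$.

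The main obstacle is thus the extension of Theorem~\ref{thm:trichotomy} from $\Q[[z]]$ to $K[[z]]$. The new feature is that $d(m)^{m+1}\Delta_m(g_n)$ is only a nonzero element of $\cO_K$, and the elementary bound $|x|\geq 1$ for a nonzero rational integer must be replaced by the product formula $\prod_v |x|_v=1$. To isolate a usable archimedean lower bound, one fixes an embedding $\sigma_0\colon K\hookrightarrow\C$ (to which Corollary~\ref{cor:cor of Polya's inequality} is applied) and bounds $|\sigma(\Delta_m(g_n))|$ from above at every other archimedean embedding $\sigma$ by the same P\'olya estimate applied to the Galois conjugate $\sigma(f)$; each such conjugate is itself D-finite and admits a simply-connected domain around $0$ of suitable conformal radius, since the failure of (i) prevents any $\sigma(f)$ from having radius of convergence less than one. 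At non-archimedean places one uses only that $d(m)$ times each coefficient lies in $\cO_K$. After aggregating these contributions the exponential lower bound $\lcm\{\den(a_i):i\leq n,\,i\notin S\}>\beta^n$ survives with some $\beta>1$ depending on $f$ and $[K:\Q]$. Verifying that $\beta$ remains strictly above $1$ after absorbing all $[K:\Q]$ archimedean contributions is the most delicate quantitative point of the argument.
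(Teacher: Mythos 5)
Your proposal follows the paper for part (a), for ruling out $r\neq 1$ via Proposition~\ref{prop:when r neq 1}, and for the ``some $P$ makes $\sum P(n)a_nz^n$ rational'' branch. Where you diverge is the crucial remaining case, and there the route you sketch is genuinely different from the paper's --- and is left incomplete.

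The paper never extends Theorem~\ref{thm:trichotomy} or Theorem~\ref{thm: irrational D-finite and lcm d_i} beyond $\Q[[z]]$. Instead it fixes an integral basis $\gamma_1,\ldots,\gamma_m$ of $K$ and passes to the traces $g_i(z)=\Tr(\gamma_i f(z))=\sum_n \Tr(\gamma_i a_n)z^n\in\Q[[z]]$. Each $g_i$ is D-finite, has radius of convergence $1$ (else Proposition~\ref{prop:when r neq 1} forces $h(\Tr(\gamma_i a_n))\gg n$ and hence $h(a_n)\gg n$, contradicting the failure of (i)). If every $g_i$ has a polynomial multiple that is rational, the dual-basis identity $a_n=\sum_i c_i\Tr(\gamma_i a_n)$ reconstructs $a_n$ and yields (iii); if some $g_{i_0}$ has no such polynomial multiple, Theorem~\ref{thm: irrational D-finite and lcm d_i} applies directly over $\Q$ and gives (ii), since $\den(\Tr(\gamma_{i_0}a_n))\mid\den(a_n)$. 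This trace reduction is precisely what lets the authors use the $\Q$-version of the trichotomy as a black box.

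Your alternative --- proving a $K$-version of the trichotomy by replacing ``nonzero rational integer $\Rightarrow$ absolute value $\ge 1$'' with the product formula, bounding $|\sigma(\Delta_m(g_n))|$ at each archimedean embedding $\sigma$ by applying Corollary~\ref{cor:cor of Polya's inequality} to the conjugate series $\sigma(f)$, and then taking norms --- is plausible and, if carried through, would likely give a more self-contained proof, bypassing the trace manipulation and the second case split entirely. But the proposal stops at exactly the place that requires work: you flag ``Verifying that $\beta$ remains strictly above $1$ after absorbing all $[K:\Q]$ archimedean contributions is the most delicate quantitative point'' without doing it. One must check that each $\sigma(f)$ has radius of convergence exactly $1$ (your Galois-invariance argument for this is correct) and has conformal radius $\rho_\sigma>1$, and then that the product $\prod_\sigma r_\sigma^{-m(m+1)}$ over all embeddings, after extracting the $(m+1)[K:\Q]$-th root, still beats the subexponential error terms. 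As written, this is a genuine gap: the key lemma your argument hinges on is asserted, not proved.

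One minor point: a D-finite function is not ``meromorphic on all of $\C$'' --- consider $\log(1-z)$, which has a branch point. What is true, and what you actually need, is that a D-finite series has only finitely many singularities and therefore no natural boundary.
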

\begin{proof}
Part (a) follows from Proposition~\ref{prop:when r neq 1}, we now assume $r\notin\{0,\infty\}$. 
Suppose neither (i) nor (ii) hold. Let $K$ be a number field 
such that $f(z)\in K[[z]]$ and let $\cO$ be its ring of integers.
Let $m:=[K:\Q]$ and let $\{\gamma_1,\ldots,\gamma_m\}$ be an integral basis of
$K$. Write
$\Tr:=\Trace_{K/\Q}$. If there exist a nonzero $P(z)\in\Qbar[z]$ such that $\sum P(n)a_nz^n$ is a rational function then (iii) follows from  Proposition~\ref{prop:rat}. From now on, we assume that there does not exist such a $P$.

For $1\leq i\leq m$, write $$ g_i(z)=\Tr(\gamma_i f(z))=\sum_{n}a_{i,n}z^n\in\Q[[z]],$$ where
$a_{i,n}=\Tr(\gamma_ia_n)$. 
From the basic height inequalities in Subsection~\ref{sec:height} and the assumption that (i) does not hold, for each $i\in\{1,\ldots,m\}$ we do \emph{not} have that $h(a_{i,n})\gg n$
on a set of positive density. By Proposition~\ref{prop:when r neq 1} and the fact that $g_i$ is D-finite, we have that $g_i$ has radius of convergence $1$ for $i\in\{1,\ldots,m\}$.

First, consider the case that for every $i\in\{1,\ldots,m\}$ there exists a nonzero $P_i(z)\in\Qbar[z]$
such that $ \sum_{n} P_i(n)a_{i,n}z^n$ is a rational function. For every $i\in\{1,\ldots,m\}$,  Proposition~\ref{prop:rat} implies the existence of $s_i\in\N$ and $R_{i,j}(z)\in \Qbar(z)$ such that 
$a_{i,s_in+j}=R_{i,j}(n)$ for $j\in\{0,\ldots,s_i-1\}$ and sufficiently large $n$. From elementary field theory, there exist $c_1,\ldots,c_m\in K$ such that for every $x\in K$ we have
\begin{equation}\label{eq:elementary field theory}
x=c_1\Tr(\gamma_1 x)+\cdots+c_m\Tr(\gamma_m x).
\end{equation}
Indeed, by linearity over $\Q$, it suffices to have \eqref{eq:elementary field theory} for $x\in\{\gamma_1,\ldots,\gamma_n\}$. To find the $c_i$'s, we solve
the linear system
$$ M\cdot [c_1,\ldots,c_m]^T=[\gamma_1,\ldots,\gamma_m]^T$$
where $M=(\Tr(\gamma_i\gamma_j))_{1\leq i,j\leq m}$ is invertible \cite[pp.~11]{Neu99_AN}.
Then we can write $a_n=c_1a_{1,n}+\cdots+c_ma_{m,n}$ for every $n$. It now follows that (iii) holds with $s=\lcm\{s_1,\ldots,s_m\}$.

We now consider the case when there is $i_0\in\{1,\ldots,m\}$ such that for every nonzero $P(z)\in\Qbar[z]$ the series
$\displaystyle \sum_{n}P(n)a_{i_0,n}z^n$ is not a rational function. Theorem~\ref{thm: irrational D-finite and lcm d_i} yields $\beta>1$ such that for every set $S\subset \N$ of zero density we have
$$\lcm\{\den(a_{i_0,k}):\ k\leq n,k\notin S\}>\beta^n$$
 for all sufficiently large $n$. This implies
 $$\lcm\{\den(a_k):\ k\leq n,k\notin S\}>\beta^n$$
 for all sufficiently large $n$ since $\den(a_{i_0,k})\mid \den(a_k)$ for every $k$. But since we are assuming that (ii) does not hold, the current case cannot happen and we finish the proof.
\end{proof}

\begin{proof}[Proof of Theorem~\ref{thm:main thm}] By Theorem~\ref{thm:slightly stronger}, it remains to prove part (b) of Theorem~\ref{thm:main thm}. 
Assume that neither (i) nor (ii) of Theorem~\ref{thm:main thm}(b) hold. By Lemma~\ref{lem:kappa from beta}, we have that neither (i) nor (ii) of Theorem~\ref{thm:slightly stronger} hold. Hence there exist $s\in\N$ and rational functions $Q_0,\ldots,Q_{s-1}\in K(z)$ such that $a_{sn+i}=Q_i(n)$ for every $i\in\{0,\ldots,s-1\}$ and sufficiently large $n$. First, we consider the case when $Q_i(z)$ is a polynomial for every $i\in\{0,\ldots,s-1\}$. This means the $a_n$'s for sufficiently large $n$ are quasipolynomial \cite[Section~4.4]{Sta11_EC1}. Then it follows that $f(z)$ is a rational function whose finite poles must be roots of unity \cite[Proposition~4.4.1]{Sta11_EC1}.

We now consider the case when some $Q_{i_0}$ is not a polynomial. Let $\cO$ be the ring of integers of $K$, write $Q_{i_0}(z)=A(z)/B(z)$, where $A$ and $B$ are coprime polynomials with coefficients in $\cO$ and $B$ is non-constant. It is an easy fact that $\den(A(n)/B(n))\gg n^{\deg(B)}$ when $n$ is sufficiently large and we include the proof here for the sake of completeness. Write $\delta_n=\den(A(n)/B(n))$. There exist $U(z),V(z)\in\cO[z]$ and a nonzero $C\in\cO$ such that:
$$U(z)A(z)+V(z)B(z)=C.$$
Then for every $v\in M_K^0$ and $n\in\N$ we have:
\begin{equation}\label{eq:C vs max A,B at v}
\vert C\vert_v\leq \max\{\vert A(n)\vert_v,\vert B(n)\vert_v\}.
\end{equation}

Let $v\in M_K^0$ such that $\vert A(n)\vert_v\geq \vert B(n)\vert_v$. From $\vert \delta_n A(n)/B(n)\vert_v\leq 1$, we
have:
$$\vert C\delta_n\vert_v\leq \vert C\vert_v \cdot\vert B(n)/A(n)\vert_v\leq \vert B(n)\vert_v,$$
where the rightmost inequality follows from \eqref{eq:C vs max A,B at v} and the assumption on $v$.

Let $v\in M_K^0$ such that $\vert A(n)\vert_v<\vert B(n)\vert_v$. We have:
$$\vert C \delta_n\vert_v\leq \vert C\vert_v\leq \vert B(n)\vert_v,$$
where the rightmost inequality follows from \eqref{eq:C vs max A,B at v} and the assumption on $v$.

Therefore we have $\vert C\delta_n\vert_v\leq \vert B(n)\vert_v$ for every $v\in M_K^0$. Combining this with the product formula and the fact that $\delta_n\in\Q$, we have:
$$\delta_n \prod_{v\in M_K^{\infty}} \vert C\vert_v=\frac{1}{\prod_{v\in M_K^0} \vert C\delta_n\vert_v}\geq\frac{1}{\prod_{v\in M_K^0}\vert B(n)\vert_v}=\prod_{v\in M_K^{\infty}}\vert B(n)\vert_v\gg n^{\deg(B)}.$$
Since $\deg(B)\geq 1$, we have that $\den(a_n)\gg n$ on a set of positive density. Since we are assuming that (ii) does not hold, the current case cannot happen and we finish the proof.
\end{proof}

\begin{remark}
Throughout this paper, we obtain lower bounds for $h(a_n)$ for $n$ in a set of positive \emph{upper} density. We explain how to slightly improve some of the previous results. For a subset $S$ of $\N$, its lower density is $$\liminf_{n\to\infty}\frac{\vert S\cap [1,n]\vert}{n}.$$ We can strengthen Theorem~\ref{thm:trichotomy}(ii), Theorem~\ref{thm: irrational D-finite and lcm d_i}, and Theorem~\ref{thm:slightly stronger}(b)(ii) by replacing ``zero density''  by ``zero lower density'' using similar arguments to the earlier proofs. Finally, we can strengthen Theorem~\ref{thm:main thm}(b)(ii) by replacing ``positive upper density'' by ``positive lower density''. 
\end{remark}

 \section{Concluding Remarks}\label{sec:conclude}
We start with a comment on the subtlety of Theorem~\ref{thm:main thm}. In view of Theorem~\ref{thm:main thm}, Theorem~\ref{thm:slightly stronger}, and Lemma~\ref{lem:kappa from beta}, it is natural to ask whether one can strengthen Theorem~\ref{thm:main thm} by replacing the property ``$\den(a_n)\gg n$ 
on a set of positive density'' in (ii) by the property ``$\lcm\{\den(a_k):\ k\leq n,k\notin S\}>\beta^n$
for all sufficiently large $n$ for every $S$ of zero density''. The answer is negative even for the simple example 
$$\log(1+x)=\displaystyle \frac{1}{x}-\frac{1}{2}x^2+\frac{1}{3}x^3-\cdots$$ 
in which $\den(a_n)=n$ for every $n$. A version of the prime number theorem gives 
$\lcm\{1,\ldots,n\}=e^{n+o(n)}$ for all large $n$. On the other hand, even when we relax the condition ``for all large $n$'' to the condition ``for infinitely many $n$'', it is still possible to construct a set $S$ of zero density such that $\lcm\{i:\ i\leq n,i\notin S\}$ fails to have an exponential lower bound:
\begin{proposition}
There exists $S\subset\N$ of zero density such that for every $\beta>1$ the inequality
$$\lcm\{i:\ i\leq n,i\notin S\}>\beta^n$$
holds for only finitely many $n$.
\end{proposition}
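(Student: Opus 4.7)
My plan is to construct $S$ explicitly as the set of integers possessing an atypically large prime factor, and then verify the two required properties by elementary prime-counting estimates. Concretely, I would take
$$S \;:=\; \{ m \in \N : m \geq 3 \text{ and the largest prime factor of } m \text{ exceeds } m/\log m\}.$$
The heuristic is that removing all multiples of "unusually large" primes eliminates most of the contribution to the lcm, while the set of such integers is sparse.

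For the density bound, each $m \in S \cap [1,n]$ can be written as $m = pk$ with $p$ prime and $1 \leq k < \log m \leq \log n$, so
$$|S \cap [1,n]| \;\leq\; \sum_{k=1}^{\lfloor\log n\rfloor} \pi(n/k) \;\ll\; \sum_{k=1}^{\log n}\frac{n}{k\log(n/k)} \;\ll\; \frac{n\log\log n}{\log n} \;=\; o(n),$$
using the Chebyshev bound $\pi(x) \ll x/\log x$. For the lcm bound, note that if $m \leq n$ and $m \notin S$, then every prime factor of $m$ is at most $m/\log m$; since $x/\log x$ is increasing for $x > e$, every such prime factor is at most $n/\log n$. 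Letting $e_p := \max\{v_p(m) : m \leq n,\, m \notin S\}$, we have $e_p = 0$ for $p > n/\log n$ and $p^{e_p} \leq n$ always, whence
$$\log\lcm\{i \leq n : i \notin S\} \;\leq\; \sum_{p \leq n/\log n} \lfloor\log_p n\rfloor \log p.$$
Splitting the sum at $\sqrt n$, the range $p \leq \sqrt n$ contributes $\leq \pi(\sqrt n)\log n = O(\sqrt n)$, while in the range $\sqrt n < p \leq n/\log n$ we have $\lfloor\log_p n\rfloor = 1$, giving a contribution $\leq \theta(n/\log n) = O(n/\log n)$ by Chebyshev's estimate $\theta(x) \ll x$. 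The total is $O(n/\log n) = o(n)$. Thus for every fixed $\beta > 1$, eventually $\log\lcm\{i \leq n : i \notin S\} < n\log\beta$, so $\lcm > \beta^n$ for only finitely many $n$.

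The only point of care is the calibration of the threshold $m/\log m$: a fixed-exponent threshold such as $m^{1-\varepsilon}$ or $\sqrt m$ would give $S$ positive density by standard Dickman-type estimates, while a threshold much closer to $m$ would allow too many large primes to survive in the complement and spoil the subexponential lcm bound. The specific choice $m/\log m$ is the natural calibration forcing $\sum_{p \in (m/\log m,\, m]} 1/p \to 0$ (by Mertens' theorem), which is what drives the density computation; from there both estimates are standard, and I do not anticipate a substantive obstacle.
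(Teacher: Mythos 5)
Your proof is correct and takes a genuinely different route from the paper's. You define $S$ explicitly as the set of integers $m\geq 3$ whose largest prime factor exceeds $m/\log m$; the density bound $|S\cap[1,n]|\ll n\log\log n/\log n$ follows by summing Chebyshev estimates $\pi(n/k)$ over the cofactors $k<\log n$, and the lcm bound $\log\lcm\{i\leq n:\ i\notin S\}\ll n/\log n$ follows because every prime factor of any $m\leq n$ with $m\notin S$ is at most $n/\log n$. The paper instead uses a dyadic block construction: it fixes a slowly increasing auxiliary sequence $(c_k)$, places into $S$ those integers in $(2^{k-1},2^k]$ divisible by some prime in $(2^k/c_k,\,2^k]$, and estimates $|S_k|$ via the Mertens-type sum $\sum_{a<p\leq b}1/p$ together with the prime number theorem for the lcm. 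Your construction is more explicit and runs entirely on elementary prime-counting estimates. The paper's construction is more flexible: because $(c_k)$ can be made to grow arbitrarily slowly, the paper in fact proves that for any prescribed increasing $\alpha(n)\to\infty$ one may arrange $|S\cap[1,n]|=o(\alpha(n)\cdot n/\log n)$, a quantitative refinement needed for the cited application in \cite{BGNS22_AG} that a fixed threshold such as $m/\log m$ does not yield. For the proposition as literally stated, both arguments are complete.
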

\begin{proof}
We wish to thank Andrew Granville 
for the following construction improving the one in an earlier version of the paper. Let $\alpha:\ \N\rightarrow (0,\infty)$ be an increasing function (i.e. $\alpha(m)\leq \alpha(n)$ for $m\leq n$) such that 
$\displaystyle\lim_{n\to\infty}\alpha(n)=\infty$. For the purpose of \cite[Section~1]{BGNS22_AG}, we now construct $S\subset\N$ such that
\begin{equation}\label{eq:o(nalpha(n)/logn}
    \vert S\cap [1,n]\vert=o(\alpha(n)\cdot n/\log n)
\end{equation}
and for every $\beta>1$ there are only finitely many $n$ such that
$$\lcm\{i:\ i\leq n,i\notin S\}>\beta^n.$$

Let $(c_k)_{k\in\N}$ be an increasing sequence of positive numbers  such that 
\begin{equation}\label{eq:c_k properties}
c_{k+1}<2c_k,\ \displaystyle\lim_{k\to\infty} c_k=\infty,\ \log c_k=o(\alpha(2^{k-1})),\ \text{and}\ \log c_k=o(k),
\end{equation}
for example we may take 
$c_1=\min\{1,\alpha(1)\}$ and 
$c_k=\min\{1.9c_{k-1},k,\alpha(2^{k-1})\}$ for $k\geq 2$. For $k\in\N$, let 
$$S_k:=\{2^{k-1}<n\leq 2^k:\ \text{there is a prime $p$ with $2^k/c_k<p\leq 2^k$ and $p\mid n$.}\}.$$

When $k$ is sufficiently large, we have $\log(2^k/c_k)\sim k\log 2$ thanks to \eqref{eq:c_k properties}. Combining this with \cite[Theorem~2.7(d)]{MV06_MN}, we have:
\begin{align}\label{eq:S_k}
    \begin{split}
        \vert S_k\vert \leq \sum_{2^k/c_k<p\leq 2^k} \frac{2^k}{p}&=2^k(\log\log 2^k-\log\log (2^k/c_k)+O(1/\log(2^k/c_k)))\\
        &=2^k\log \frac{k\log 2}{k\log 2-\log c_k} + O(2^k/k)\\
        &=2^k\log\left(1+\frac{\log c_k}{k\log 2-\log c_k}\right)+O(2^k/k)\\
        &=\frac{2^k\log c_k}{k\log 2}+o((2^k\log c_k)/k)
    \end{split}
\end{align}
where the last equality follows from $\log (1+t)=t+o(t)$ where
$t=\log c_k/(k\log 2-\log c_k)$ is small.

Let $n$ be sufficiently large and let $k$ be such that $2^{k-1}<n\leq 2^k$. In the following estimates, the implied constants are independent of $n$ and $k$. From \eqref{eq:S_k}, we have:
\begin{align}
    \begin{split}
        \vert S\cap [1,n]\vert\leq \sum_{j=1}^k\vert S_j\vert\ll \sum_{j=1}^k \frac{2^j\log c_j}{j}\ll \frac{2^k\log c_k}{k}=o(\alpha(n)\cdot n/\log n)
    \end{split}
\end{align}
since $2^k/k=O(n/\log n)$ while $\log c_k=o(\alpha(2^{k-1}))=o(\alpha(n))$.

It remains to estimate $\lcm\{i:\ i\leq n, i\notin S\}$. Let $i\leq n$ and $i\notin S$. Suppose $p$ is a prime dividing $i$. We prove:
\begin{equation}\label{eq:claim}
    \text{Claim:}\ p\leq 2^k/c_k.
\end{equation}
Suppose otherwise $p>2^k/c_k$, then from the definition of $S_k$ and the assumption that $i\notin S_k$, we have $\ell\leq k-1$ such that $2^{\ell-1}<i\leq 2^{\ell}$. And since $i\notin S_{\ell}$, we have
$$p<2^{\ell}/c_{\ell}.$$
But then we would have $2^{\ell}/c_{\ell}>2^k/c_k$, contradicting 
\eqref{eq:c_k properties}. This proves the claim in \eqref{eq:claim}
which, in turn, implies
$$\log\lcm\{i:\ i\leq n,i\notin S\}\ll \sum_{p\leq 2^k/c_k}\log p=2^k/c_k+o(2^k/c_k)=o(n)$$
thanks to the Prime Number Theorem \cite[Chapter~6]{MV06_MN} and \eqref{eq:c_k properties}. This finishes the proof.
\end{proof}

 We now pose some questions related to the work we have obtained.  In Corollary~\ref{cor:cor of main thm} we show that if $f(z)=\sum a_n z^n \in K[[z]]$ is D-finite and not rational, with $K$ a number field, then we have the inequality $$\displaystyle h(a_n)>\displaystyle\frac{1}{[K:\Q]}\log n+O(1)$$ for $n$ in a set of positive density.  Moreover, we showed that there are examples which show this is essentially best possible when $K=\mathbb{Q}$. In the case that $[K:\Q]>1$, however, the bound appears to be suboptimal and so we ask if one can improve the bounds in this case.
 	\begin{question}\label{Q1}
	For $K$ and $f$ as in Corollary~\ref{cor:cor of main thm}, is it true that $h(a_n)>\log n + O(1)$
	on a set of positive density?
	\end{question}
	
	Another natural question is whether one can give a complete classification of the types of growth rates of heights of coefficients of D-finite series that can occur.  To the best of our knowledge, there are really only four types of possible behaviours, and the question below asks if this gives a  taxonomy of possible ``height gaps'' of D-finite series.
	\begin{question}\label{q:classification}
	Let $f(z)\in\Qbar[[z]]$ be D-finite, is it true that one of the following holds:
	\begin{itemize}
	\item [(i)] $h(a_n)=O(n\log n)$ for every $n$ and $h(a_n)\gg n\log n$ on a set of positive density.
	\item [(ii)] $h(a_n)=O(n)$ for every $n$ and $h(a_n)\gg n$ on a set of positive density.
	\item [(iii)] $h(a_n)=O(\log n)$ for every $n$ and $h(a_n)\gg \log n$ on a set of positive density.
	\item [(iv)] $h(a_n)=O(1)$ for every $n$.
	\end{itemize}
	\end{question}
	
	A weaker version of (the affirmative answer to) Question~\ref{q:classification} is the following collection of four statements concerning a D-finite power series $\sum_n a_nz^n\in\Qbar[[z]]$: 
	\begin{itemize}
		\item [(i')] $h(a_n)=O(n\log n)$.
		\item [(ii')] If $h(a_n)=o(n\log n)$ then $h(a_n)=O(n)$.
		\item [(iii')] If $h(a_n)=o(n)$ then $h(a_n)=O(\log n)$.
		\item [(iv')] If $h(a_n)=o(\log n)$ then $h(a_n)=O(1)$.
	\end{itemize}  	
	Statements (i') and (iv') have been established in \cite{BNZ20_DF}. Statement (ii') is similar to the following long standing open problem in the theory of Siegel E-functions \cite{Sie29_U}, \cite[pp.~11--12]{Beu08_E}, \cite[Section~2.1]{FR22_OS}:
	\begin{question}
	Let $f(z)=\sum_{n}a_nz^n\in\Qbar[[z]]$ be D-finite. Suppose that $h(a_0,\ldots,a_n)=o(n\log n)$, is it true that $h(a_0,\ldots,a_n)=O(n)$? Here $h(a_0,\ldots,a_n)$ is taken to be $h([1:a_0:\cdots:a_n])$.
	\end{question}

 \bibliographystyle{amsalpha}
 \bibliography{DfiniteII} 	 

\end{document}